\journalname{Mathematische Zeitschrift}
\begin{document}

\title{Algebras that satisfy {A}uslander's condition\\ on vanishing of
  cohomology\thanks{Part of this work was done while L.W.C.\ visited
    the University of Nebraska-Lincoln, partly supported by grants
    from the Danish Natural Science Research Council and the Carlsberg
    Foundation.\\ H.H.\ was partly supported by the Danish Natural
    Science Research Council.}}


\dedication{In memory of Anders J.~Frankild}

\author{Lars Winther Christensen \and Henrik Holm}

\authorrunning{L.\,W.\ Christensen \and H.\ Holm}

\institute{L.\,W.\ Christensen \at Department of Mathematics and
  Statistics,
  Texas Tech University, Lubbock, TX 79409-1042, U.S.A.\\
  \email{lars.w.christensen@ttu.edu} \and H.\ Holm \at Department of
  Mathematical Sciences, University of Aarhus, DK-8000 {\AA}rhus C,
  Denmark. \\ \emph{Present address:} Department of Basic Sciences and
  Environment, University of Copenhagen,\newline Thorvaldsensvej 40,
  DK-1871 Frederiksberg C, Denmark\\ \email{hholm@life.ku.dk}}

\date{21 January 2009}

\maketitle

\begin{abstract}
  Auslander conjectured that every Artin algebra satisfies a certain
  condition on vanishing of cohomology of finitely generated
  modules. The failure of this conjecture---by a 2003 counterexample
  due to Jorgensen and \c{S}ega---motivates the consideration of the
  class of rings that \textsl{do} satisfy Auslander's condition. We
  call them AC rings and show that an AC Artin algebra that is
  left-Gorenstein is also right-Gorenstein.  Furthermore, the
  Auslander-Reiten Conjecture is proved for AC rings, and Auslander's
  G-dimension is shown to be functorial for AC rings that are
  commutative or have a dualizing complex.

  \keywords{AB ring \and AC ring \and Conjectures of Auslander,
    Reiten, and Tachikawa \and G-dimension \and Gorenstein algebra}

  \subclass{16E65 \and 16E30 \and 13D05}
\end{abstract}


\spnewtheorem*{spthma}{Theorem A}{\bf}{\sl}
\spnewtheorem*{spthmb}{Theorem B}{\bf}{\sl}
\spnewtheorem*{spthmc}{Theorem C}{\bf}{\sl}
\spnewtheorem{spipg}[theorem]{\!\!}{\bf}{\rm}
\spnewtheorem{spthm}[theorem]{Theorem}{\bf}{\sl}
\spnewtheorem{splem}[theorem]{Lemma}{\bf}{\sl}
\spnewtheorem{spcor}[theorem]{Corollary}{\bf}{\sl}
\spnewtheorem{spprp}[theorem]{Proposition}{\bf}{\sl}
\spnewtheorem{sprmk}[theorem]{Remark}{\bf}{\rm}
\spnewtheorem{spobs}[theorem]{Observation}{\bf}{\rm}
\spnewtheorem{spexa}[theorem]{Example}{\bf}{\rm}
\spnewtheorem*{pfofa}{Proof of 4.4}{\it}{\rm}
\spnewtheorem*{pfofac}{Proof of 4.4 continued}{\it}{\rm}
\spnewtheorem*{pac}{Auslander's conjecture}{\bf}{\rm}
\spnewtheorem*{parc}{The Auslander-Reiten Conjecture}{\bf}{\rm}
\spnewtheorem*{ptc}{The Tachikawa Conjectures}{\bf}{\rm}

\newlength{\thmtopspace} 
\newlength{\thmbotspace} 
\newlength{\thmheadspace} 
\newlength{\thmindent} 

\setlength{\thmtopspace}%
{0.7\baselineskip plus 0.35\baselineskip minus 0.2\baselineskip}
\setlength{\thmbotspace}%
{0.45\baselineskip plus 0.15\baselineskip minus 0.1\baselineskip}
\setlength{\thmheadspace}{0.5em} \setlength{\thmindent}{0pt}


\newlength{\thmlistleft} 
\newlength{\thmlistright} 
\newlength{\thmlistpartopsep} 
\newlength{\thmlisttopsep} 
\newlength{\thmlistparsep} 
\newlength{\thmlistitemsep} 

\setlength{\thmlistleft}{2.25em} \setlength{\thmlistright}{0pt}
\setlength{\thmlistitemsep}{0.5ex} \setlength{\thmlistparsep}{0pt}
\setlength{\thmlisttopsep}{1.5\thmlistitemsep}
\setlength{\thmlistpartopsep}{0pt}


\newcounter{eqc} \newenvironment{eqc}{\begin{list}{\upshape
      (\textit{\roman{eqc}})}%
    {\usecounter{eqc}%
      \setlength{\leftmargin}{\thmlistleft}%
      \setlength{\labelwidth}{\thmlistleft}%
      \setlength{\rightmargin}{\thmlistright}%
      \setlength{\partopsep}{\thmlistpartopsep}%
      \setlength{\topsep}{\thmlisttopsep}%
      \setlength{\parsep}{\thmlistparsep}%
      \setlength{\itemsep}{\thmlistitemsep}}}%
  {\end{list}}%

\newcommand{\eqclbl}[1]{{\upshape(\textit{#1})}}

\newcommand{\preveqc}[1]{{\upshape(\textit{\roman{eqc}#1})}}

\newcommand{\eqcman}[1]{\eqclbl{#1}\hspace{\labelsep}}


\newcounter{prt} \newenvironment{prt}{\begin{list}{\upshape
      (\alph{prt})}%
    {\usecounter{prt}%
      \setlength{\leftmargin}{\thmlistleft}%
      \setlength{\labelwidth}{\thmlistleft}%
      \setlength{\rightmargin}{\thmlistright}%
      \setlength{\partopsep}{\thmlistpartopsep}%
      \setlength{\topsep}{\thmlisttopsep}%
      \setlength{\parsep}{\thmlistparsep}%
      \setlength{\itemsep}{\thmlistitemsep}}}%
  {\end{list}}%

\newcommand{\prtlbl}[1]{{\upshape(#1)}}

\newcommand{\prevprt}[1]{{\upshape(\alph{prt}#1)}}


\newcounter{rqm} \newenvironment{rqm}{\begin{list}{\upshape
      (\arabic{rqm})}%
    {\usecounter{rqm}%
      \setlength{\leftmargin}{\thmlistleft}%
      \setlength{\labelwidth}{\thmlistleft}%
      \setlength{\rightmargin}{\thmlistright}%
      \setlength{\partopsep}{\thmlistpartopsep}%
      \setlength{\topsep}{\thmlisttopsep}%
      \setlength{\parsep}{\thmlistparsep}%
      \setlength{\itemsep}{\thmlistitemsep}}}%
  {\end{list}}%


\newenvironment{itemlist}{\nopagebreak \begin{list}{$\bullet$}%
    {\setlength{\leftmargin}{\thmlistleft}%
      \setlength{\labelwidth}{\thmlistleft}%
      \setlength{\rightmargin}{\thmlistright}%
      \setlength{\partopsep}{\thmlistpartopsep}%
      \setlength{\topsep}{\thmlisttopsep}%
      \setlength{\parsep}{\thmlistparsep}%
      \setlength{\itemsep}{\thmlistitemsep}}}%
  {\end{list}}%



\newcommand{\pgref}[1]{\ref{#1}}
\newcommand{\resref}[2][]{#1\pgref{res:#2}}
\newcommand{\thmref}[2][Theorem~]{#1\pgref{thm:#2}}
\newcommand{\corref}[2][Corollary~]{#1\pgref{cor:#2}}
\newcommand{\prpref}[2][Proposition~]{#1\pgref{prp:#2}}
\newcommand{\lemref}[2][Lemma~]{#1\pgref{lem:#2}}
\newcommand{\obsref}[2][Observation~]{#1\pgref{obs:#2}}
\newcommand{\conref}[2][Construction~]{#1\pgref{con:#2}}
\newcommand{\dfnref}[2][Definition~]{#1\pgref{dfn:#2}}
\newcommand{\exaref}[2][Example~]{#1\pgref{exa:#2}}
\newcommand{\rmkref}[2][Remark~]{#1\pgref{rmk:#2}}
\newcommand{\charef}[2][Chapter~]{#1\ref{cha:#2}}
\newcommand{\secref}[2][Section~]{#1\ref{sec:#2}}

\newcommand{\partpgref}[2]{\ref{#1}\prtlbl{#2}}
\newcommand{\partresref}[2]{\partpgref{res:#1}{#2}}
\newcommand{\partthmref}[3][Theorem~]{#1\partpgref{thm:#2}{#3}}
\newcommand{\partcorref}[3][Corollary~]{#1\partpgref{cor:#2}{#3}}
\newcommand{\partprpref}[3][Proposition~]{#1\partpgref{prp:#2}{#3}}
\newcommand{\partlemref}[3][Lemma~]{#1\partpgref{lem:#2}{#3}}
\newcommand{\partobsref}[3][Observation~]{#1\partpgref{obs:#2}{#3}}
\newcommand{\partdfnref}[3][Definition~]{#1\partpgref{dfn:#2}{#3}}
\newcommand{\partexaref}[3][Example~]{#1\partpgref{exa:#2}{#3}}
\newcommand{\partrmkref}[3][Remark~]{#1\partpgref{rmk:#2}{#3}}


\renewcommand{\eqref}[1]{(\ref{eq:#1})}


\newcommand{\rescite}[2][?]{\cite[#1]{#2}}
\newcommand{\thmcite}[2][?]{\cite[thm.~#1]{#2}}
\newcommand{\corcite}[2][?]{\cite[cor.~#1]{#2}}
\newcommand{\prpcite}[2][?]{\cite[prop.~#1]{#2}}
\newcommand{\lemcite}[2][?]{\cite[lem.~#1]{#2}}
\newcommand{\chpcite}[2][?]{\cite[chap.~#1]{#2}}
\newcommand{\seccite}[2][?]{\cite[sec.~#1]{#2}}


\newcommand{\nt}[2][$^\diamondsuit$]{%
  \hspace{0pt}#1\marginpar{\tt\raggedleft #1 #2}}


\newcommand{\set}[2][\,]{\{#1 #2 #1\}}
\newcommand{\setof}[3][\;]{\{#1#2 \mid #3#1\}}
\newcommand{\ZZ}{\mathbb{Z}} \newcommand{\dinZ}{{\d\in\ZZ}}
\newcommand{\hinZ}{{h\in\ZZ}}
\newcommand{\qtext}[1]{\quad\text{#1}\quad}
\newcommand{\qqtext}[1]{\qquad\text{#1}\qquad}
\newcommand{\qand}{\qtext{and}} \newcommand{\qqand}{\qqtext{and}}
\newcommand{\deq}{\:=\:} \newcommand{\dge}{\:\ge\:}
\newcommand{\dis}{\:\is\:} \renewcommand{\a}{\alpha}
\renewcommand{\d}{v} 
\newcommand{\m}{\mathfrak{m}} \newcommand{\p}{\mathfrak{p}}
\newcommand{\is}{\cong} \newcommand{\qis}{\simeq}
\renewcommand{\le}{\leqslant} \renewcommand{\ge}{\geqslant}
\newcommand{\onto}{\twoheadrightarrow}
\newcommand{\lra}{\longrightarrow}
\newcommand{\xla}[2][]{\xleftarrow[#1]{\;#2\;}}
\newcommand{\xra}[2][]{\xrightarrow[#1]{\;#2\;}}
\newcommand{\qla}{\xla{\;\qis\;}} \newcommand{\qra}{\xra{\;\qis\;}}
\newcommand{\dra}[2]{\xra{\dif[#1]{#2}}}
\newcommand{\poly}[2][k]{#1[#2]}
\newcommand{\pows}[2][k]{#1[\mspace{-2.3mu}[#2]\mspace{-2.3mu}]}
\newcommand{\QQ}{\mathbb{Q}} \newcommand{\Rm}{(R,\m)}
\newcommand{\Rhat}{\widehat{R}}
\newcommand{\mapdef}[4][\rightarrow]{\nobreak{#2\colon #3 #1 #4}}
\newcommand{\qisdef}[4][\xra{\qis}]{\nobreak{#2\colon #3 #1 #4}}
\newcommand{\dmapdef}[4][\lra]{\nobreak{#2\colon #3\:#1\:#4}}
\newcommand{\disdef}[4][\ira]{\nobreak{#2\colon #3 #1 #4}}
\newcommand{\dqisdef}[4][\qra]{\nobreak{#2\colon #3 #1 #4}}
\newcommand{\Ker}[1]{\nobreak{\operatorname{Ker}#1}}
\newcommand{\Coker}[1]{\nobreak{\operatorname{Coker}#1}}
\newcommand{\Cone}[1]{\nobreak{\operatorname{Cone}#1}}
\newcommand{\Conep}[1]{\nobreak{(\operatorname{Cone}#1)}}
\newcommand{\tev}[1]{\omega_{#1}} \newcommand{\hev}[1]{\theta_{#1}}
\newcommand{\dif}[2][]{{\partial}^{#2}_{#1}}
\newcommand{\Cy}[2][]{\operatorname{Z}_{#1}(#2)}
\newcommand{\Co}[2][]{\operatorname{C}_{#1}(#2)}
\renewcommand{\H}[2][]{\operatorname{H}_{#1}(#2)}
\newcommand{\Shift}[2][]{\mathsf{\Sigma}^{#1}{#2}}
\newcommand{\Shiftp}[2][]{(\Shift[#1]{#2})}
\newcommand{\Tha}[2]{#2_{{\scriptscriptstyle\le}#1}}
\newcommand{\Thb}[2]{#2_{{\scriptscriptstyle\ge}#1}}
\newcommand{\Tsa}[2]{#2_{{\scriptscriptstyle\subset}#1}}
\newcommand{\Tsb}[2]{#2_{{\scriptscriptstyle\supset}#1}}
\newcommand{\Tsap}[2]{(\Tsa{#1}{#2})}
\newcommand{\lgtR}{\operatorname{length}R}
\newcommand{\codim}[1]{\operatorname{codim}#1}
\newcommand{\SocR}{\operatorname{Soc}R}
\newcommand{\Spec}[1]{\operatorname{Spec}#1}
\newcommand{\lgt}[2][R]{\operatorname{length}_{#1}#2}
\newcommand{\id}[2][R]{\operatorname{id}_{#1}#2}
\newcommand{\pd}[2][R]{\operatorname{pd}_{#1}#2}
\newcommand{\Gdim}[2][R]{\operatorname{G-dim}_{#1}#2}
\newcommand{\Hom}[3][R]{\operatorname{Hom}_{#1}(#2,#3)}
\newcommand{\RHom}[3][R]{\operatorname{\mathbf{R}Hom}_{#1}(#2,#3)}
\newcommand{\Ext}[4][R]{\operatorname{Ext}_{#1}^{#2}(#3,#4)}
\newcommand{\tp}[3][R]{\nobreak{#2\otimes_{#1}#3}}
\newcommand{\tpp}[3][R]{(\tp[#1]{#2}{#3})}
\newcommand{\Ltp}[3][R]{\nobreak{#2\otimes_{#1}^{\mathbf{L}}#3}}
\newcommand{\Ltpp}[3][R]{(\Ltp[#1]{#2}{#3})}
\newcommand{\Tor}[4][R]{\operatorname{Tor}^{#1}_{#2}(#3,#4)}
\newcommand{\Cat}[2]{{\mathsf{#2}}(#1)}
\newcommand{\D}[1][R]{\Cat{#1}{D}}


\hyphenation{mo-dule com-plex com-plex-es mor-phism ho-mo-mor-phism
  iso-mor-phism pro-jec-tive in-jec-tive re-so-lu-tion ho-mo-lo-gy
  ho-mo-lo-gi-cal ho-mo-lo-gi-cally du-a-liz-ing re-si-due}


\makeatletter \def\@nobreak@#1{\mathchoice%
  {\nobreakdef@\displaystyle\f@size{#1}}%
  {\nobreakdef@\nobreakstyle\tf@size{\firstchoice@false #1}}%
  {\nobreakdef@\nobreakstyle\sf@size{\firstchoice@false #1}}%
  {\nobreakdef@\nobreakstyle\ssf@size{\firstchoice@false #1}}%
  \check@mathfonts}%
\def\nobreakdef@#1#2#3{\hbox{{%
      \everymath{#1}%
      \let\f@size#2\selectfont%
      #3}}}%
\makeatother

\newcommand{\uppar}[1]{{\upshape (}#1\/{\upshape )}}

\numberwithin{equation}{theorem}

\newcommand{\lw}[2][$^\diamondsuit$]{\nt[#1]{LW:#2}}
\newcommand{\hh}[2][$^\diamondsuit$]{\nt[#1]{HH:#2}}

\newcommand{\RX}{\poly[R]{X}} \newcommand{\RXX}{\pows[R]{X}}
\newcommand{\Ra}{\mspace{-4mu}\Rightarrow\mspace{-4mu}}
\newcommand{\AD}[1]{\operatorname{D}#1}
\newcommand{\ADp}[1]{\operatorname{D}(#1)}
\newcommand{\HH}[2][]{\operatorname{H}_{#1}#2}
\renewcommand{\mod}[1]{\Cat{#1}{mod}}
\newcommand{\Mod}[1]{\Cat{#1}{Mod}} \newcommand{\ab}[1]{b_{#1}}
\newcommand{\crs}[1]{\pmb{#1}} \newcommand{\propp}{\textsc{(p)}}
\newcommand{\propq}{\textsc{(q)}} \newcommand{\tv}{\textsc{(tv)}}
\newcommand{\ac}{\textsc{(ac)}} \newcommand{\gc}{\textsc{(gc)}}
\newcommand{\acp}{\textsc{(ac$'$)}}
\newcommand{\accm}{\textsc{(ac-mcm)}}
\newcommand{\uac}{\textsc{(uac)}}
\newcommand{\uaccm}{\textsc{(uac-mcm)}}
\newcommand{\nc}{\textsc{(nc)}} \newcommand{\gnc}{\textsc{(gnc)}}
\newcommand{\gncm}{\textsc{(arc-g)}} \newcommand{\arc}{\textsc{(arc)}}
\newcommand{\tc}{\textsc{(tc)}} \newcommand{\tci}{\textsc{(tc{\small
      1})}} \newcommand{\tcis}{\textsc{(tc{\footnotesize 1})}}
\newcommand{\tcii}{\textsc{(tc{\small 2})}}
\newcommand{\tciis}{\textsc{(tc{\footnotesize 2})}}
\newcommand{\ltc}{\textsc{(ab\c{s}c)}}
\newcommand{\codimR}{\operatorname{codim}R}
\newcommand{\syz}[3][A]{\operatorname{syz}^#1_#2(#3)}
\newcommand{\Syz}[2][A]{\mathsf{Syz}(#2,#1)}
\newcommand{\Alg}{\Lambda} \newcommand{\Algo}{\Lambda^{\circ}}
\newcommand{\Aop}{A^{\circ}} \newcommand{\Bop}{B^{\circ}}
\newcommand{\x}{\pmb{x}} \newcommand{\X}{\pmb{X}}
\newcommand{\Kc}{\operatorname{K}}

\newenvironment{condition}[2][4em]{\begin{list}{}%
    {\setlength{\leftmargin}{#1}\setlength{\rightmargin}{#2}%
      \setlength{\labelwidth}{3.5em}%
      \setlength{\partopsep}{0pt}%
      \setlength{\topsep}{\thmbotspace}%
      \setlength{\parsep}{0pt}%
      \setlength{\itemsep}{0.3\thmbotspace}} \small} {\end{list}}%


\section*{Introduction}

\noindent The studies of algebras and modules by methods of
homological algebra pivot around cohomology groups and functors---in
particular, their vanishing.  The conjecture of Auslander we refer to
in the abstract asserts that every Artin algebra satisfies the
condition \ac\ defined below.  Auslander's conjecture is stronger than
the Finitistic Dimension Conjecture and several other long-standing
conjectures for finite dimensional algebras---including the
Auslander-Reiten and Nakayama Conjectures; see \cite[ch.~V]{mas1},
\cite{DHp90}, and \cite{KYm96}.  In \cite{DAJLMS04} Jorgensen and
\c{S}ega exhibit a finite dimensional algebra that fails to satisfy
\ac, thereby overturning Auslander's conjecture. This makes relevant a
subtle point: one knows that if \textsl{all} finite dimensional
algebras had satisfied \ac, then they would all have finite finitistic
dimension, but it is \textsl{not} known if a given algebra that
satisfies \ac\ must have finite finitistic dimension. What \textsl{is}
known, is that a finite dimensional algebra $\Alg$ over a field $k$
has finite finitistic dimension if the enveloping algebra $\Alg^e =
\tp[k]{\Alg}{\Algo}$ satisfies \ac; see \seccite[1]{DHp90}.

Such observations motivate the study of \emph{AC rings}, that is,
left-noetherian rings $A$ that satisfy Auslander's condition on
vanishing of cohomology:

\begin{condition}[2.5em]{1.5em}
\item[\ac] For every finitely generated left $A$-module $M$ there
  exists an integer $\ab{M} \ge 0$ such that for every finitely
  generated left $A$-module $N$ one has: $\Ext[A]{\gg 0}{M}{N}=0$
  implies $\Ext[A]{> \ab{M}}{M}{N}=0$.
\end{condition}

\noindent For certain commutative rings this study was initiated by
Huneke and Jorgensen \cite{CHnDAJ03}. In this paper we give special
attention to problems from Auslander's work in representation
theory---including the conjectures mentioned above.

\begin{center}
  $***$
\end{center}

\noindent Auslander and Reiten conjectured \cite{MAsIRt75} that a
finitely generated module $M$ over an Artin algebra $\Alg$ is
projective if $\Ext[\Alg]{i}{M}{M} = 0 = \Ext[\Alg]{i}{M}{\Alg}$ for
all $i \ge 1$. See Appendix~A for a brief survey of this and
related conjectures. To facilitate the discussion, we distinguish
between \textsl{conjectures} (about all algebras) and
\textsl{conditions} (on a single algebra). Consider the following
condition on a left-noetherian ring $A$:

\begin{condition}{4em}
\item[\arc] Every finitely generated left $A$-module $M$ with
  $\Ext[A]{\ge 1}{M}{M\oplus A} = 0$ is projective.
\end{condition}

\noindent The Auslander-Reiten Conjecture can now be restated as ``All
Artin algebras satisfy \arc''.  At the level of conjectures,
Auslander's conjecture is stronger than the Finitistic Dimension
Conjecture, and that one implies the Auslander-Reiten Conjecture.
Thus, had \textsl{all} algebras satisfied \ac, then one would know
that all algebras satisfy \arc. Theorem~A below gives new insight at
the level of conditions: it implies that \textsl{any} given AC~ring
satisfies \arc. Our proof of Theorem~A avoids considerations of
finitistic dimensions, and it remains unknown if every AC~Artin
algebra has finite finitistic dimension.

\begin{spthma}
  Let $A$ be a left-noetherian ring that satisfies \ac, and let $M$ be
  a finitely generated left $A$-module. If one has $\,\Ext[A]{\gg
    0}{M}{M} = 0$ and $\Ext[A]{\ge 1}{M}{A} = 0$, then $M$ is
  projective.
\end{spthma}

\noindent This theorem is a special case of our main
result~\thmref[]{ar}. Notice that the vanishing conditions imposed on
$M$ in Theorem~A appear to be weaker than those in the
Auslander-Reiten Conjecture; we discuss this in \pgref{elab}.

It is an open question---also due to Auslander and Reiten
\cite{MAsIRt91}---whether an Artin algebra is left-Gorenstein if and
only if it is right-Gorenstein. This is known as the Gorenstein
Symmetry Question; the next partial answer is proved in
\prpref[]{artin-id} and~\prpref[]{tachikawa}.  \newpage
\begin{spthmb}
  Let $A$ be a two-sided noetherian ring. If $A$ and $\Aop$ satisfy
  \ac\ and
  \begin{rqm}
  \item $A$ is an Artin algebra, or
  \item $A$ has a dualizing complex {\normalfont (}as defined in\,
    {\normalfont\cite{CFH-06})},
  \end{rqm}
  then $\id[A]{A} < \infty$ if and only if\, $\id[\Aop]{A} < \infty$
  \uppar{whence, $\id[A]{A} = \id[\Aop]{A}$ by {\normalfont
      \cite{YIw80}}}.%
\end{spthmb}%
\noindent We do not know if every Artin algebra has a dualizing
complex, but every finite dimensional $k$-algebra does have one,
cf.~\pgref{dc}.

To study the module category of a Gorenstein ring, Auslander and
Bridger \cite{MAsMBr69} introduced the so-called G-dimension. A
finitely generated left module $M\ne 0$ over a two-sided noetherian
ring $A$ is of G-dimension $0$ if it is reflexive and
$\Ext[A]{i}{M}{A} = 0 = \Ext[\Aop]{i}{\Hom[A]{M}{A}}{A}$ for all $i
\ge 1$. Implicit in their work is the question whether all two-sided
noetherian rings $A$ satisfy the condition:

\begin{condition}[3em]{2em}
\item[\gc] Every finitely generated left $A$-module $M\ne 0$ with
  $\Ext[A]{\ge 1}{M}{A} = 0$ is of G-dimension $0$.
\end{condition}
By another example of Jorgensen and \c{S}ega \cite{DAJLMS06}, also
this question has a negative answer, even for commutative local finite
dimensional $k$-algebras. The following partial answer is part of
\thmref[]{g}.

\begin{spthmc}
  Let $A$ be a two-sided noetherian ring that has a dualizing complex
  {\normalfont (}as defined in {\normalfont \cite{CFH-06})} or is
  commutative.  If $A$ satisfies \ac, then it satisfies \gc.
\end{spthmc}

By work of Huneke, \c{S}ega, and Vraciu \cite{HSV-04}, the
Auslander-Reiten Conjecture holds for commutative noetherian local
rings with radical cube zero, and the counterexamples in
\cite{DAJLMS04,DAJLMS06} show that such rings need not satisfy \ac\ or
\gc. Here is a summary in diagram form:

\vspace*{-4ex}

{\small%
  \begin{equation*}
    \qquad
    \begin{gathered}
      \SelectTips{cm}{} \xymatrix@C=5.5em@R=5.5em{ \ac
        \ar@<1ex>@{=>}[r]^-{(4)} \ar@<-1ex>@{=>}[d]_-{(2)} & \arc
        \ar@<1ex>@{=>}|{\,\mid\,}[l]^-{(3)}
        \ar@{=>}|{\backslash}[ld]^-{(5)}
        \\
        \gc \ar@<-1ex>@{=>}|{-}[u]_-{(1)} }
    \end{gathered}
    \mspace{-110mu}
    \begin{split}
      \text{\footnotesize (1) } & \text{\footnotesize \thmcite[(4.13)
        and
        (4.20)]{MAsMBr69} and \corcite[3.3(1)]{DAJLMS04};}\\
      \text{\footnotesize (2) } & \text{\footnotesize Theorem~C, for
        two-sided noetherian rings that}\\[-1ex] & \text{\footnotesize
        have a dualizing complex or are commutative;}\\
      \text{\footnotesize (3) } & \text{\footnotesize
        \corcite[3.3(2)]{DAJLMS04} and \thmcite[4.1(1)]{HSV-04};}\\
      \text{\footnotesize (4) } & \text{\footnotesize Theorem~A;}\\
      \text{\footnotesize (5) } & \text{\footnotesize
        \thmcite[1.7]{DAJLMS06} and \thmcite[4.1(1)]{HSV-04}.}
    \end{split}
  \end{equation*}
}%

Theorems~A, B, and C are proved in Sections
\secref[]{properties}--\secref[]{Gdim}. In \secref{examples} we
discuss simple procedures for generating new AC rings from existing
ones.

Appendix~A recapitulates certain aspects of the homological
conjectures for finite dimensional $k$-algebras in order to place the
present work in proper perspective.

Theorem~C relies on a technical result, \lemref{nonstandardTEV}, which
owes an intellectual debt to work of Huneke and Jorgensen
\cite{CHnDAJ03}.  Combined with other techniques,
\lemref[]{nonstandardTEV} yields new proofs and modest generalizations
of the main result in \cite{CHnDAJ03} on symmetric Ext-vanishing over
commutative noetherian Gorenstein AC rings; these are given in
Appendix~B.

Many of our proofs use the derived category over a ring. In the next
section we recall the (standard) notation used throughout the paper.

\section{Prerequisites}
\label{sec:prerequisites}

Throughout, $A$ denotes a left-noetherian ring which is an algebra
over a commutative ring $\Bbbk$ (e.g.\ \mbox{$\Bbbk = \ZZ$}), and
$\Aop$ is the opposite ring. The letter $k$ denotes a field, and
$\Alg$ denotes a finite dimensional $k$-algebra or, more generally, an
Artin algebra.

\begin{spipg}
  \label{modules}
  Modules (over $A$ or $\Alg$) are left modules, unless otherwise
  specified. We write $\Mod{A}$ for the category of all $A$-modules
  and $\mod{A}$ for the full subcategory of finitely generated
  $A$-modules.

  For $M$ and $N$ in $\Mod{A}$, the notation $\Ext[A]{\ge n}{M}{N} =0$
  means that $\Ext[A]{i}{M}{N}$ vanish for all $i\ge n$. We write
  $\Ext[A]{\gg 0}{M}{N} =0$ if $\Ext[A]{\ge n}{M}{N} =0$ for some~$n$.
  For $M$ in $\mod{A}$, a number $\ab{M}$ with the property required
  in \ac, see the Introduction, is called an \emph{Auslander bound}
  for $M$. We also consider rings $A$ over which there is a uniform
  Auslander bound for all $M$ in $\mod{A}$, i.e.~rings that satisfy:

  \begin{condition}{4em}
  \item[\uac] There is a $b \ge 0$ such that for all finitely
    generated $A$-modules $M$ and $N$ one has: \\ $\Ext[A]{\gg
      0}{M}{N}=0$ implies $\Ext[A]{> b}{M}{N}=0$.
  \end{condition}

  In \cite{CHnDAJ03} the smallest integer $b$ with this property is
  called the Ext-index of $A$.
\end{spipg}

\begin{spipg}
  A complex of $A$-modules is graded homologically,
  \begin{equation*}
    M \deq \cdots \lra M_{\d+1} \dra{\d+1}{M} M_{\d} \dra{\d}{M}
    M_{\d-1} \lra \cdots,
  \end{equation*}
  and, for short, called an \emph{$A$-complex.} The suspension of $M$
  is the complex $\Shift{M}$ with $\Shiftp{M}_\d = M_{\d-1}$ and
  $\dif{\Shift{M}} = - \dif{M}$. With the notation
  \begin{equation*}
    \Co[\d]{M} = \Coker{\dif[\d+1]{M}} \qqand \Cy[\d]{M} =
    \Ker{\dif[\d]{M}},
  \end{equation*}
  soft truncations of $M$ are defined as
  \begin{align*}
    \Tsa{u}{M} &\deq 0 \to \Co[u]{M} \to M_{u-1} \to M_{u-2} \to
    \cdots
    \text{ and} \\
    \Tsb{w}{M} &\deq \cdots \to M_{w+2} \to M_{w+1} \to \Cy[w]{M} \to
    0.
  \end{align*}
  The hard truncations of $M$ are defined as
  \begin{align*}
    \Tha{u}{M} \deq 0 \to M_{u} \to M_{u-1} \to \cdots \qand
    \Thb{w}{M} \deq \cdots \to M_{w+1} \to M_w \to 0.
  \end{align*}
  We say that $M$ is \emph{left-bounded} if $M_\d =0$ for $\d \gg 0$,
  \emph{right-bounded} if $M_\d =0$ for $\d \ll 0$, and \emph{bounded}
  if $M_\d = 0$ for $|\d| \gg 0$.  If the homology complex $\H{M}$ is
  (left/right-) bounded, then $M$ is said to be \emph{homologically
    \uppar{left/right-}bounded}. The notation $\sup{M}$ and $\inf{M}$
  is used for the supremum and infimum of the set
  $\setof{\dinZ}{\H[\d]{M}\ne 0}$ with the conventions that
  $\sup\varnothing = -\infty$ and $\inf\varnothing = \infty$.

  A morphism $\a$ of complexes is called a \emph{quasiisomorphism},
  and marked by the symbol $\qis$, if it induces an isomorphism on the
  level of homology.  The mapping cone of $\a$ is denoted
  $\Cone{\a}$. Recall that the complex $\Cone{\a}$ is exact if and
  only if $\a$ is a quasiisomorphism.  Quasiisomorphisms between
  $A$-complexes are isomorphisms in the derived category
  $\D[A]$. Isomorphisms in $\D[A]$ are also marked by the
  symbol~$\qis$.
\end{spipg}

\begin{spipg}
  We use standard notation, $\RHom[A]{-}{-}$ and $\Ltp[A]{-}{-}$, for
  the right derived Hom functor and the left derived tensor product
  functor; see \cite[ch.~10]{wei}. Recall that for all $A$-modules $M$
  and $N$ and all $\Aop$-modules $K$ there are isomorphisms
  \begin{equation*}
    \Ext[A]{i}{M}{N} \is \HH[-i]{\RHom[A]{M}{N}} \qand \Tor[A]{i}{K}{M} \is
    \H[i]{\Ltp[A]{K}{M}}.
  \end{equation*}

  Resolutions of complexes, projective dimension (pd), and injective
  dimension (id) are treated in \cite{LLAHBF91}. We make frequent use
  of the following: Every homologically left-bounded complex has a
  left-bounded injective resolution; every homologically right-bounded
  complex $M$ has a right-bounded free resolution $L$, and if $M$ has
  degreewise finitely generated homology, then $L$ can be taken to be
  degreewise finitely generated. In particular, every homologically
  right-bounded complex $M$ has a projective resolution and the
  projective dimension is given as:
  \begin{equation*}
    \pd{M} = \inf \left\{ \, \sup \{ i \in \ZZ \mid P_{i} \ne 0 \} \,
      \left|\;
        \mbox{$P$ is a projective resolution of $M$}
      \right.
    \right\}.
  \end{equation*}
  The injective dimension of a complex is defined similarly.
\end{spipg}

\begin{splem}
  \label{lem:Ext}
  Let $X$ and $Y$ be $A$-complexes. Assume that $X$ is
  homologically~right-bounded and let \mbox{$P \qra X$} be a
  projective resolution; assume that $Y$ is homologically left-bounded
  and let \mbox{$Y \qra I$} be an injective resolution.  If
  $\,\RHom[A]{X}{Y}$ is homologically bounded and $s \ge \sup{X}$,
  then $\Ext[A]{\ge 1}{\Co[s]{P}}{\Cy[\d]{I}}=0$ for all $0 \gg \d$.
\end{splem}

\begin{proof}
  Let $s\ge \sup{X}$ and note that $\Tsa{s}{P} \qis X$ in $\D[A]$.
  Application of $\RHom[A]{-}{Y}$ to the distinguished triangle in
  $\D[A]$,
  \begin{equation*}
    \Tha{s-1}{P} \lra \Tsa{s}{P} \lra \Shift[s]{\Co[s]{P}} \lra,
  \end{equation*}
  induces a long exact sequence of homology modules, which yields
  isomorphisms
  \begin{align*}
    \HH[\d+1]{\RHom[A]{\Tha{s-1}{P}}{Y}} 
    &\is \HH[\d]{\RHom[A]{\Shift[s]{\Co[s]{P}}}{Y}} \\
    &\is \HH[\d+s]{\RHom[A]{\Co[s]{P}}{Y}},
  \end{align*}
  for $\d+1 < \inf{\RHom[A]{X}{Y}}$. Obviously,
  $\pd[A]{(\Tha{s-1}{P})} \le s-1$ and, therefore,
  \begin{equation*}
    \inf{\RHom[A]{\Tha{s-1}{P}}{Y}} \ge \inf{Y} -(s-1);
  \end{equation*}
  see \thmcite[2.4.P]{LLAHBF91}. Set \mbox{$w =
    \min\set{\inf{Y},\inf{\RHom[A]{X}{Y}}+s-1}$}; it follows that
  \begin{equation}
    \label{eq:a1}
    \HH[\d]{\RHom[A]{\Co[s]{P}}{Y}} =0 \text{ for all }
    \d < w. 
  \end{equation}
  If $\d \le w$, then $\d \le \inf{Y}$, so there is an isomorphism of
  module functors
  \begin{equation*}
    \Ext[A]{i}{-}{\Cy[\d]{I}} \is \HH[\d-i]{\RHom[A]{-}{Y}}
  \end{equation*}
  for every $i >0$, cf.~\cite[proof of lem.~(6.1.12)]{lnm}. In
  particular,
  \begin{equation*}
    \Ext[A]{i}{\Co[s]{P}}{\Cy[\d]{I}} \is
    \HH[\d-i]{\RHom[A]{\Co[s]{P}}{Y}} = 0 
  \end{equation*}
  for all $i>0$, where the last equality follows from \eqref{a1}. \qed
\end{proof}


\section{The Auslander-Reiten Conjecture}
\label{sec:properties}

In this section we prove Theorem~A from the Introduction.  We open
with a technical lemma.

\begin{splem}
  \label{lem:exact}
  Assume that $A$ satisfies \ac.  Let $U$ be an exact $A$-complex and
  $C$ be a finitely generated $A$-module. If
  \begin{prt}
  \item $U_\d$ is finitely generated for all $\d \gg 0$,
  \item $\Ext[A]{\ge 1}{C}{U_\d}=0$ for all $\d \in \ZZ$, and
  \item there exists a $w \in \ZZ$ such that $\Ext[A]{\gg
      0}{C}{\Cy[w]{U}}=0$,
  \end{prt}
  then $\Ext[A]{\ge 1}{C}{\Cy[\d]{U}}=0$ for all $\d \in \ZZ$. In
  particular, $\Hom[A]{C}{U}$ is exact.
\end{splem}

\begin{proof}
  Apply $\Hom[A]{C}{-}$ to $0 \to \Cy[\d+1]{U} \to U_{\d+1} \to
  \Cy[\d]{U} \to 0$, then (b) yields%
  \begin{equation}
    \label{eq:one}
    \Ext[A]{i}{C}{\Cy[\d]{U}} \is \Ext[A]{i+n}{C}{\Cy[\d+n]{U}} 
    \text{ for all $\d \in \ZZ$, $i>0$, and $n\ge 0$}.
  \end{equation}
  If $\d \ge w$, then $\Ext[A]{\gg 0}{C}{\Cy[\d]{U}}=0$. Indeed,
  \eqref{one} yields isomorphisms
  \begin{align*}
    \Ext[A]{i+\d-w}{C}{\Cy[\d]{U}} \dis
    \Ext[A]{i+(\d-w)}{C}{\Cy[w+(\d-w)]{U}} \dis
    \Ext[A]{i}{C}{\Cy[w]{U}},
  \end{align*}
  for $i>0$, and the right-most Ext group vanishes by (c) for $i \gg
  0$.  By (a) there is an integer $t$ such that $\Cy[\d]{U}$ is
  finitely generated for $\d \ge t$. As $A$ satisfies \ac,
  \begin{equation}
    \label{eq:two}
    \Ext[A]{>\ab{}}{C}{\Cy[\d]{U}}=0 \text{ for all } \d
    \ge m=\max\set[]{t,w}, 
  \end{equation}
  where $\ab{}$ is an Auslander bound for $C$. To see that
  $\Ext[A]{\ge 1}{C}{\Cy[\d]{U}}=0$ for all $\d$, consider the cases
  $\d \ge m-\ab{}$ and $\d \le m-\ab{}$ separately. In the following,
  let $i>0$. If $\d \ge m-\ab{}$, then
  \begin{equation*}
    \Ext[A]{i}{C}{\Cy[\d]{U}} \is \Ext[A]{i+\ab{}}{C}{\Cy[\d+\ab{}]{U}} = 0
  \end{equation*}
  by \eqref{one} and \eqref{two}. If $\d \le m-\ab{}$ then, in
  particular, $m-\d \ge \ab{} \ge 0$, and thus
  \begin{displaymath}
    \Ext[A]{i}{C}{\Cy[\d]{U}} \is \Ext[A]{i+(m-\d)}{C}{\Cy[\d+(m-\d)]{U}} =
    \Ext[A]{i+m-\d}{C}{\Cy[m]{U}} = 0;
  \end{displaymath}
  again by \eqref{one} and \eqref{two}. \qed
\end{proof}

\begin{sprmk}
  The lemma above may fail for rings that do not satisfy \ac.  Indeed,
  one counterexample to Auslander's conjecture is a commutative local
  self-injective finite dimensional $k$-algebra $R$ for which there
  exist finitely generated modules $C$ and $Z$, such that
  $\Ext{i}{C}{Z} \ne 0$ if and only if $i= 0,1$; see
  \corcite[3.3.(1)]{DAJLMS04}. Because $R$ is self-injective, the
  modules $C$ and $Z$ have G-dimension $0$; see
  \prpcite[3.8]{MAsMBr69}. Let $U$ be a complete projective resolution
  of $Z$, see \thmcite[(4.1.4)]{lnm}, then $U$ and $C$ satisfy the
  requirements in \lemref{exact}, but $\Ext{1}{C}{Z} \ne 0$.
\end{sprmk}

Theorem~A in the Introduction is an immediate consequence of the next
result.

\begin{spthm}
  \label{thm:ar}
  Assume that $A$ satisfies \ac, and let $M$ be an $A$-complex. If $M$
  has bounded and degreewise finitely generated homology, and
  $\RHom[A]{M}{M \oplus A}$ is homologically bounded, then $M$ has
  finite projective dimension given by
  \begin{displaymath}
    \pd[A]{M} = -\inf{\RHom[A]{M}{A}} < \infty.
  \end{displaymath}
\end{spthm}

\begin{proof}
  We may assume that $M \not\qis 0$ in $\D[A]$. We need only prove
  that $\pd[A]{M}$ is finite, then a standard argument yields the
  equality displayed above; see the proof of
  \prpcite[(2.3.10)]{lnm}. Take a right-bounded resolution $L \qra M$
  by finitely generated free $A$-modules and consider the integer
  \begin{equation*}
    s = \max\set{-\inf{\RHom[A]{M}{A}},\, \sup{M}}.
  \end{equation*}
  We will show that the cokernel $\Co[s]{L}$ is projective, i.e.\
  $\Ext[A]{1}{\Co[s]{L}}{\Co[s+1]{L}}=0$. To this end, take an
  injective resolution $M \qra I$ with $I_\d =0$ for $\d > \sup{M}$;
  see \corcite[2.7.I]{LLAHBF91}.  Since $\RHom[A]{M}{M}$ is
  homologically bounded, there is by \lemref{Ext} an integer $u \le
  \inf{M}$ such that
  \begin{equation}
    \label{eq:b1}
    \Ext[A]{\ge 1}{\Co[s]{L}}{\Cy[u]{I}} = 0.
  \end{equation}
  There are quasiisomorphisms
  \begin{displaymath}
    L \qra M \qra I \qla \Tsb{u}{I},
  \end{displaymath}
  so by \cite[1.4.P]{LLAHBF91} there is a quasiisomorphism $\alpha
  \colon L \qra \Tsb{u}{I}$.  We claim that \lemref{exact} applies to
  $U=\Cone{\alpha}$ and the finitely generated module $C = \Co[s]{L}$.
  Requirement \partlemref[]{exact}{a} is clearly met, and so is
  \partlemref[]{exact}{c}, as $\Cone{\a}$ is right-bounded.  To verify
  \partlemref[]{exact}{b} it suffices, in view of \eqref{b1}, to show
  that $\Ext[A]{\ge 1}{\Co[s]{L}}{A}=0$, and this follows as
  \begin{equation*}
    \Ext[A]{i}{\Co[s]{L}}{A} \is
    \HH[-(i+s)]{\RHom[A]{M}{A}} = 0 \text{ for all $i>0$;}
  \end{equation*}
  cf.~\cite[prf.\ of (4.3.9)]{lnm}. In particular, \lemref{exact}
  gives $\Ext[A]{\ge 1}{\Co[s]{L}}{\Cy[s+1]{\Cone{\a}}}=0,$ and by the
  choice of $I$ we have $\Cy[s+1]{\Cone{\a}} = \Co[s+1]{L}$. \qed
\end{proof}

\begin{sprmk}
  \label{elab}
  The condition \arc\ and Theorem A in the Introduction draw identical
  conclusions from apparently different assumptions on a finitely
  generated $A$-module $M$, namely:
  \begin{prt}
  \item $\Ext[A]{\ge 1}{M}{M\oplus A} = 0$; compared to
  \item $\Ext[A]{\gg 0}{M}{M} = 0$ and $\Ext[A]{\ge 1}{M}{A} = 0$.
  \end{prt}
  Clearly, (a) implies (b). We do not know if the two are equivalent,
  not even if $A$ is commutative local and Gorenstein.  \thmref{ar}
  shows that if $A$ is AC, then (a) and (b) are equivalent. A much
  stronger result holds if $A$ is commutative local and complete
  intersection, then $\Ext[A]{\ge 1}{M}{A} = 0$ and vanishing of
  $\Ext[A]{2i}{M}{M}$ for a single integer $i>0$ implies that $M$ is
  free; see~\thmcite[4.2]{LLAROB00}. If $A$ is commutative local (AC
  or not) with radical cube zero, then vanishing of
  $\Ext[A]{i}{M}{M\oplus A}$ for four consecutive values of $i\ge 2$
  implies that $M$ is free; see \thmcite[4.1]{HSV-04}.
\end{sprmk}

\section{The Gorenstein Symmetry Question}
\label{sec:gs}

For a two-sided noetherian ring $A$, we do not know if Auslander's
condition is symmetric, that is, if $A$ and $\Aop$ satisfy \ac\
simultaneously. For Artin algebras, however, the uniform condition
\uac, defined in \pgref{modules}, is symmetric.

\begin{spobs}
  \label{obs:artin}
  Let $\Alg$ be an Artin algebra. The canonical duality functor
  $$\AD{}\colon \mod{\Algo} \lra \mod{\Alg},$$ see
  \thmcite[II.3.3]{rta}, provides isomorphisms
  \begin{equation*}
    \Ext[\Algo]{i}{M}{N} \is
    \Ext[\Alg]{i}{\ADp{N}}{\ADp{M}}
  \end{equation*}
  for all finitely generated $\Algo$-modules $M$ and $N$ and all
  integers $i$.  This shows that $\Algo$ satisfies \uac\ if and only
  if $\Alg$ does.
\end{spobs}

Auslander and Reiten \cite{MAsIRt91} raise the question whether an
Artin algebra is left-Gorenstein if and only if it is
right-Gorenstein. The next proposition contains part $(1)$ of
Theorem~B from the Introduction, and it uses \prpcite[6.10]{MAsIRt91}
to establish an ``algebra-wise'' relation between Auslander's
conjecture and the Finitistic Dimension Conjecture.

\begin{spprp}
  \label{prp:artin-id}
  Let $\Alg$ be an Artin algebra that satisfies \ac.  If\,
  $\id[\Alg]{\Alg}$ is finite, then $\id[\Algo]{\Alg}$ and the
  finitistic dimension of $\Alg$ \uppar{on both sides}\footnote{In
    general, it is not known if the left-finitistic dimension of a
    finite dimensional algebra is finite if the right-finitistic
    dimension is, but one knows that they may differ; see
    \cite[exa.~2.2]{CUJHLn82}.} is finite.
\end{spprp}

\begin{proof}
  The finitely generated $\Alg$-module $\ADp{\Alg_\Alg}$ is injective.
  Set $n=\id[\Alg]{\Alg}$, then
  \begin{equation*}
    \Ext[\Alg]{>n}{\ADp{\Alg_\Alg}}{\ADp{\Alg_\Alg} \oplus
      {}_\Alg\Alg}=0,
  \end{equation*}
  so it follows from \thmref{ar} that $\pd[\Alg]{\ADp{\Alg_\Alg}}$ is
  at most $n$. For every finitely generated $\Algo$-module $N$, the
  isomorphism from \obsref{artin} yields
  \begin{align*}
    \Ext[\Algo]{i}{N_\Alg}{\Alg_\Alg} \is
    \Ext[\Alg]{i}{\ADp{\Alg_\Alg}}{\ADp{N_\Alg}} = 0 \text{ for $i >
      n$},
  \end{align*}
  whence $\id[\Algo]{\Alg} \le n$. Now the finitistic dimension of
  $\Alg$ is finite by \prpcite[6.10]{MAsIRt91}. \qed
\end{proof}

\begin{sprmk}
  For an Artin algebra $\Alg$ that satisfies \uac, it follows from
  \obsref{artin} and \prpref{artin-id} that $\id[\Alg]{\Alg}$ is
  finite if and only if $\id[\Algo]{\Alg}$ is finite.

  Nagata's regular ring of infinite Krull dimension \cite[ex.~1,
  p.~203]{Nag} is an example of a commutative noetherian ring that
  satisfies \ac\ but not \uac. However, in the realm of Artin algebras
  (or local rings) we do not know of such an example.
\end{sprmk}

Part $(2)$ of Theorem~B is a special case of \prpref{tachikawa} below,
which addresses a natural generalization of the conditions \tci\ and
\ltc\ discussed in Appendix~A.

\newcommand{\bi}[1]{{}_A#1_B}
\newcommand{\lhty}[1][D]{\grave{\chi}_#1^{\langle A,B \rangle}}
\newcommand{\rhty}[1][D]{\acute{\chi}_#1^{\langle A,B \rangle}}

\begin{spipg}
  \label{dc}
  Let $B$ be a right-noetherian ring, which is also a $\Bbbk$-algebra;
  \prpref{tachikawa} involves a dualizing complex ${}_AD_B$ for the
  pair $\langle A,B\rangle$ in the sense of \cite[def.~1.1]{CFH-06}.
  That~is,
  \begin{rqm}
  \item The complex $D$ has bounded and degreewise finitely generated
    homology over $A$ and over $\Bop$.
  \item There exists a quasi-isomorphism of complexes of bimodules,
    $\bi{P} \qra \bi{D}$, where $\bi{P}$ is right-bounded and consists
    of modules that are projective over $A$ and over $\Bop$.
  \item There exists a quasi-isomorphism of complexes of bimodules,
    $\bi{D} \qra \bi{I}$, where $\bi{I}$ is bounded and consists of
    modules that are injective over $A$ and over~$\Bop$.
  \item The homothety morphisms
    \begin{equation*}
      \qquad _AA_A \longrightarrow
      \RHom[\Bop]{\bi{D}}{\bi{D}}\quad \text{and}\quad
      _BB_B \longrightarrow \RHom[A]{\bi{D}}{\bi{D}},
    \end{equation*}
    are isomorphisms in homology.
  \end{rqm}
  If $A$ is two-sided noetherian, then a dualizing complex for
  $\langle A,A\rangle$ is called a dualizing complex for $A$. This
  generalizes the definition for commutative rings in
  \cite[V.\S2]{rad}.
  
  We do not know if every Artin $\Bbbk$-algebra $\Alg$ has a dualizing
  complex. To be precise, we do not know if the obvious candidate
  $D=\Hom[\Bbbk]{{}_\Alg\Alg_\Alg}{\Bbbk}$ has a resolution by
  $\Alg$-bimodules, as required in $(2)$. If $\Bbbk$ is a field,
  however, this $D$ is a dualizing complex for $\Alg$; see
  \cite[exa.~2.3(b)]{AYkJJZ99} and \cite[app.~A]{CFH-06}.
\end{spipg}

\begin{spprp}
  \label{prp:tachikawa}
  Let the rings $A$ and $B$ be as in {\rm \pgref{dc}}, and let $D$ be
  a dualizing complex for the pair $\langle A,B\rangle$. The complexes
  $\RHom[A]{D}{A}$ and $\RHom[\Bop]{D}{B}$ are isomorphic in
  $\D[\Bbbk]$, and when they are homologically bounded, the following
  hold:
  \begin{prt}
  \item If $A$ satisfies \ac, then $\id[\Aop]{A}$ and $\id[\Bop]{B}$
    are at most $\pd[A]{D} + \id[\Bop]{D} < \infty$.
  \item If $\Bop$ satisfies \ac, then $\id[A]{A}$ and $\id[B]{B}$ are
    at most $\pd[\Bop]{D} + \id[A]{D} < \infty$.
  \end{prt}
\end{spprp}

\begin{proof}
  The first assertion is an elementary application of swap in
  $\D[\Bbbk]$:
  \begin{align*}
    \RHom[A]{{}_AD}{{}_AA} 
    & \qis \RHom[A]{{}_AD}{\RHom[\Bop]{D_B}{{}_AD_B}} \\
    & \qis \RHom[\Bop]{D_B}{\RHom[A]{{}_AD}{{}_AD_B}} \\
    & \qis \RHom[\Bop]{D_B}{B_B}.
  \end{align*}
  
  By symmetry it suffices to prove part (a). As $\RHom[A]{D}{A}$ is
  homologically boun\-ded, it follows from \thmref{ar} that
  $\pd[A]{D}$ is finite. For every $\Aop$-module $M$ we have
  \begin{align*}
    -\inf{\RHom[\Aop]{M_A}{A_A}} 
    &= -\inf{\RHom[\Aop]{M_A}{\RHom[\Bop]{{}_AD_B}{D_B}}} \\
    &= -\inf{\RHom[\Bop]{\Ltp[A]{M_A}{{}_AD_B}}{D_B}} \\
    &\le \id[\Bop]{D} + \sup{\Ltpp[A]{M_A}{{}_AD_B}} \\
    &\le \id[\Bop]{D} + \pd[A]{D},
  \end{align*}
  where the inequalities are by \thmcite[2.4.I and 2.4.F]{LLAHBF91}.
  Thus, $\id[\Aop]{A}$ is at most $\id[\Bop]{D} + \pd[A]{D}$ by
  \thmcite[2.4.I]{LLAHBF91}.  Similarly, for every $\Bop$-module $N$
  we have
  \begin{align*}
    -\inf{\RHom[\Bop]{N_B}{B_B}} 
    &= -\inf{\RHom[\Bop]{N_B}{\RHom[A]{{}_AD}{{}_AD_B}}} \\
    &= -\inf{\RHom[A]{{}_AD}{\RHom[\Bop]{N_B}{{}_AD_B}}} \\
    &\le \pd[A]{D} - \inf{\RHom[\Bop]{N_B}{{}_AD_B}} \\
    &\le \pd[A]{D} + \id[\Bop]{D};
  \end{align*}
  this time by \thmcite[2.4.P and 2.4.I]{LLAHBF91}. \qed
\end{proof}


\section{Functoriality of G-dimension}
\label{sec:Gdim}

Now we prove Theorem~C from the Introduction; our proof hinges on the
following lemma about invertibility of the tensor evaluation morphism;
cf.~\cite[4.3]{LLAHBF91}.

\begin{splem}
  \label{lem:nonstandardTEV}
  Let $M$ and $N$ be $A$-complexes and $T$ be an $A$-bimodule. Assume
  that $M$ and $N$ have bounded and degreewise finitely generated
  homology and that ${}_AT$ is finitely generated. Consider the tensor
  evaluation morphism in $\D[\Bbbk]$:
  \begin{equation*}
    \dmapdef{\tev{MTN}}{\Ltp[A]{\RHom[A]{M}{T}}{N}}%
    {\RHom[A]{M}{\Ltp[A]{T}{N}}}.
  \end{equation*}
  If $A$ satisfies \ac\ and the three complexes
  \begin{equation*}
    \RHom[A]{M}{T},\quad \Ltp[A]{T}{N}, \qand \RHom[A]{M}{\Ltp[A]{T}{N}}
  \end{equation*}
  are homologically bounded, then $\tev{MTN}$ is an isomorphism.
\end{splem}

The lemma may fail if $A$ does not satisfy \ac; see remarks after the
proof.

\begin{proof}
  Take right-bounded resolutions \mbox{$P \qra M$} and \mbox{$Q \qra
    N$} by finitely generated free $A$-modules. The goal is to prove
  that the tensor evaluation morphism $\tev{PTQ}$ is a
  quasiisomorphism in the category of $\Bbbk$-complexes. This is
  achieved as follows: As $\Ltp[A]{T}{N}$ is homologically bounded, we
  may take a left-bounded injective resolution
  $\qisdef{\rho}{\tp[A]{T}{Q}}{I}$. Set $s =
  \max\set{\sup{M},-\inf{\RHom[A]{M}{T}}}$; it is an integer as we are
  free to assume $M\not\qis 0$ in $\D[A]$. There is now a
  quasiisomorphism $\qisdef{\tau}{P}{\Tsa{s}{P}}$. Consider the
  commutative diagram in the category of $\Bbbk$-complexes
  \begin{equation*}
    \xymatrix@C=6em{
      \tp[A]{\Hom[A]{P}{T}}{Q}
      \ar[r]^-{\tev{PTQ}}
      & \Hom[A]{P}{\tp[A]{T}{Q}}
      \ar[d]_-{\qis}^{\Hom[A]{P}{\rho}}
      \\
      \tp[A]{\Hom[A]{\Tsa{s}{P}}{T}}{Q}
      \ar[u]^-{\tp[A]{\Hom[A]{\tau}{T}}{Q}}
      \ar[d]^-\is_{\tev{\Tsa{s}{P}TQ}}
      & \Hom[A]{P}{I}
      \\
      \Hom[A]{\Tsa{s}{P}}{\tp[A]{T}{Q}}
      \ar[r]^-{\Hom[A]{\Tsa{s}{P}}{\rho}}
      & \Hom[A]{\Tsa{s}{P}}{I}.
      \ar[u]^-{\qis}_-{\Hom[A]{\tau}{I}}
    }
  \end{equation*}
  The vertical morphisms on the right are clearly quasiisomorphisms,
  and the tensor evaluation morphism $\tev{\Tsa{s}{P}TQ}$ is easily
  seen to be invertible, cf.~\prpcite[2.1(v)]{LWCHHlb}. It remains to
  prove that $\tp[A]{\Hom[A]{\tau}{T}}{Q}$ and
  $\Hom[A]{\Tsa{s}{P}}{\rho}$ are quasiisomorphisms.
  
  For the first one, it is sufficient to demonstrate exactness of
  \begin{displaymath}
    \Cone{\Hom[A]{\tau}{T}} \is \Shift{\Hom[A]{\Cone{\tau}}{T}}.
  \end{displaymath}
  Since the complex $\Cone{\tau}$ is exact and right-bounded, it is
  enough to argue that
  \begin{displaymath}
    \Ext[A]{\ge 1}{(\Cone{\tau})_\d}{T}=0 \text{ for all }
    \d \in \ZZ.
  \end{displaymath}
  For $\d \ne s$ this is clear, as the module $\Conep{\tau}_\d$ is
  projective. Since $\Conep{\tau}_s = \Co[s]{P}\oplus P_{s-1}$, the
  case $\d=s$ follows from the isomorphisms
  \begin{equation}
    \label{eq:ono}
    \Ext[A]{i}{\Co[s]{P}}{T} \is
    \HH[-(s+i)]{\RHom[A]{M}{T}} = 0 \text{ for all } i>0,
  \end{equation}  
  which are immediate by the choice of $s$; cf.~\cite[proof of
  lem.~(4.3.9)]{lnm}.
  
  To see that $\Hom[A]{\Tsa{s}{P}}{\rho}$ is a quasiisomorphism, it
  suffices by \prpcite[2.6(a)]{CFH-06} to argue that
  $\Hom[A]{\Tsap{s}{P}_\d}{\rho}$ is a quasiisomorphism for all $\d
  \in \ZZ$. For $\d \ne s$ this is clear, as $\Tsap{s}{P}_\d$ is
  projective. Since $\Tsap{s}{P}_s = \Co[s]{P}$, the case $\d=s$ is
  equivalent to exactness of
  \begin{displaymath}
    \Cone{\Hom[A]{\Co[s]{P}}{\rho}} \is \Hom[A]{\Co[s]{P}}{\Cone{\rho}}.
  \end{displaymath}
  To complete the proof we show that \lemref{exact} applies to the
  complex $\Cone{\rho}$ and the finitely generated module $\Co[s]{P}$.
  Since $\Conep{\rho}_\d = I_\d \oplus \tpp[A]{T}{Q}_{\d-1}$, where
  $\tpp[A]{T}{Q}_{\d-1}$ is a finite direct sum of copies of ${}_AT$,
  it follows from \eqref{ono} that requirement \lemref[]{exact}(b) is
  fulfilled.  Furthermore, since $I_\d =0$ for $\d \gg 0$ also
  \lemref[]{exact}(a) is met. Finally, homological boundedness of
  $\RHom[A]{M}{\Ltp[A]{T}{N}}$ implies by \lemref{Ext} that
  $\Ext[A]{\ge 1}{\Co[s]{P}}{\Cy[\d]{I}} = 0$ for all $\d \ll 0$.
  Since $\Cy[\d]{\Cone{\rho}} = \Cy[\d]{I}$ for $\d \ll 0$, also
  requirement \lemref[]{exact}(c) is fulfilled. \qed
\end{proof}

In \cite{DAJLMS04} is given an example of a self-injective finite
dimensional $k$-algebra that does not satisfy \ac, so it follows from
the next proposition that \lemref{nonstandardTEV} may fail for a ring
that does not satisfy \ac.

For a Gorenstein ring---i.e.\ a two-sided noetherian ring with
$\id[A]{A}$ and $\id[\Aop]{A}$ finite---the equivalence of \eqclbl{i}
and \eqclbl{ii} below is proved by Mori \thmcite[3.3]{IMr07}.

\begin{spprp}
  \label{prp:equivalentAB}
  If\, $\id[A]{A}$ is finite, then the following conditions are
  equivalent:
  \begin{eqc}
  \item $A$ satisfies \ac.
  \item $A$ satisfies \uac.
  \item For all $A$-complexes $M$ and $N$ with bounded and degreewise
    finitely generated homology one has: if\, $\RHom[A]{M}{N}$ is
    homologically bounded, then
    \begin{equation*}
      \dmapdef{\tev{MAN}}{\Ltp[A]{\RHom[A]{M}{A}}{N}}{\RHom[A]{M}{N}}
    \end{equation*}    
    is an isomorphism in $\D[\Bbbk]$.
  \end{eqc}
\end{spprp}

\begin{proof}
  Since $\id[A]{A}$ is finite, the implication $(i)\Ra(iii)$ follows
  by \lemref{nonstandardTEV}. Obviously $(ii)$ implies $(i)$, so it
  remains to show the implication $(iii)\Ra(ii)$.
  
  Let $M$ and $N$ be finitely generated $A$-modules such that
  $\Ext[A]{\gg 0}{M}{N}=0$. This means that $\RHom[A]{M}{N}$ is
  bounded, so by $(iii)$ there is an isomorphism
  \begin{equation*}
    \Ltp[A]{\RHom[A]{M}{A}}{N} \qra \RHom[A]{M}{N}
  \end{equation*}  
  in $\D[\Bbbk]$. Consequently,
  \begin{align*}
    -\inf{\RHom[A]{M}{N}} 
    &= -\inf{\Ltpp[A]{\RHom[A]{M}{A}}{N}} \\
    &\le -\inf{\RHom[A]{M}{A}} \\
    &\le \id[A]{A},
  \end{align*}
  where the first inequality follows by \lemcite[2.1.(2)]{HBF77b} and
  the second by \cite[2.4.I]{LLAHBF91}. This shows that
  $\Ext[A]{i}{M}{N}=0$ for all $i > \id[A]{A}$. \qed
\end{proof}


\begin{spipg}
  One says that the G-dimension is functorial over a two-sided
  noetherian ring if it satisfies the condition \gc\ from the
  Introduction. Examples of such rings include:
  \begin{itemlist}
  \item Gorenstein rings; see \prpcite[(3.8)]{MAsMBr69}.
  \item Commutative noetherian rings that are locally Gorenstein,
    see~\cite[(1.3.2)]{lnm}.
  \item Local Artin algebras with radical square zero; see
    \prpcite[2]{MSM74}.
  \item Commutative noetherian Golod local rings,
    see~\prpcite[1.4]{DAJLMS06}.
  \end{itemlist}
  \noindent The next result establishes Theorem~C from the
  Introduction, which adds (certain) AC rings to the list above.
\end{spipg}

\begin{theorem}
  \label{thm:g}
  Let $A$ be a two-sided noetherian ring that satisfies \ac, and
  assume that $A$ has a dualizing complex or is commutative. For every
  $A$-complex $M$ with bounded and degreewise finitely generated
  homology there is an equality:
  \begin{equation*}
    \Gdim[A]{M} = -\inf{\RHom[A]{M}{A}}.
  \end{equation*}
\end{theorem}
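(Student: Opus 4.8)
The plan is to prove the two inequalities $\Gdim[A]{M} \le -\inf{\RHom[A]{M}{A}}$ and $\Gdim[A]{M} \ge -\inf{\RHom[A]{M}{A}}$ separately. The inequality ``$\ge$'' is the easy, formal direction: if $\Gdim[A]{M} = g < \infty$ then it is standard (see \cite[chap.~2]{lnm}) that $-\inf{\RHom[A]{M}{A}} = g$, and if $\Gdim[A]{M} = \infty$ there is nothing to prove. So the real content is the inequality ``$\le$'', for which we must show: if $n = -\inf{\RHom[A]{M}{A}}$ is finite, then $\Gdim[A]{M} \le n$. First I would reduce to the case where $M$ is a module by a soft-truncation argument; alternatively one works throughout with complexes and uses the characterization of finite G-dimension in terms of a bounded resolution by modules of G-dimension zero. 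The key point is to produce, for a right-bounded resolution $L \qra M$ by finitely generated free modules, a suitable syzygy (cokernel) $\Co[s]{L}$ with $s$ chosen around $n$ and $\sup{M}$, and to verify that this finitely generated module has G-dimension zero, i.e.\ is reflexive with $\Ext[A]{\ge 1}{\Co[s]{L}}{A} = 0$ and $\Ext[\Aop]{\ge 1}{\Hom[A]{\Co[s]{L}}{A}}{A} = 0$.

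The vanishing $\Ext[A]{\ge 1}{\Co[s]{L}}{A} = 0$ for $s \ge n$ is immediate from the isomorphism $\Ext[A]{i}{\Co[s]{L}}{A} \is \HH[-(i+s)]{\RHom[A]{M}{A}}$, exactly as in the proof of \thmref{ar}. The delicate part is the vanishing of $\Ext[\Aop]{\ge 1}{\Hom[A]{\Co[s]{L}}{A}}{A}$ together with reflexivity, and this is where the dualizing-complex (or commutative) hypothesis and \lemref{nonstandardTEV} enter. The idea is to apply \lemref{nonstandardTEV} with $T = {}_AA_A$, or more precisely with $T$ replaced by (a bimodule incarnation of) the dualizing complex $D$, to get invertibility of the tensor-evaluation morphism $\tev{MDN}$ for appropriate $N$; combined with the fact that $\RHom[A]{-}{D}$ is a genuine duality on the bounded derived category of complexes with finitely generated homology when $D$ is dualizing, this lets us transport the Ext-vanishing from the $A$-side to the $\Aop$-side. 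Concretely, I would show that finiteness of $-\inf{\RHom[A]{M}{A}}$ forces $\RHom[A]{M}{A}$ homologically bounded, hence (via $A \qis \RHom[\Aop]{D}{D}$ and swap) $\RHom[\Aop]{\RHom[A]{M}{D}}{D} \qis M$ and the biduality morphism $M \to \RHom[\Aop]{\RHom[A]{M}{A}}{A}$ can be analyzed through $D$. The commutative case is handled the same way since a commutative noetherian ring that is AC with finite... wait --- here one does \emph{not} assume $\id[A]{A}$ finite, so in the commutative case one localizes: $M_\p$ over $A_\p$ has the analogous property, $A_\p$ is still AC by a standard localization argument (or one invokes the local case), and G-dimension is detected locally by \cite[(1.2.7)]{lnm}, so it suffices to treat $A$ local; then a dualizing complex exists after completion and faithfully flat descent returns the conclusion.

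The main obstacle I anticipate is bridging the gap between \lemref{nonstandardTEV}, which gives invertibility of a \emph{tensor-evaluation} map, and the biduality statement needed for G-dimension zero: one must feed the right complexes $M, N, T$ into the lemma and check that all three boundedness hypotheses hold --- in particular that $\RHom[A]{M}{\Ltp[A]{T}{N}}$ is homologically bounded --- which is precisely where the hypothesis $-\inf{\RHom[A]{M}{A}} < \infty$ must be leveraged rather than assumed away, and where the fact that a dualizing complex has finite injective dimension on both sides is essential. A secondary technical nuisance is that \thmref{g} is stated for $M$ a complex, not a module, so either the syzygy argument must be run on $\Tsa{s}{P}$-type truncations (taking $s = \max\{-\inf{\RHom[A]{M}{A}}, \sup{M}\}$ as in \thmref{ar}) or one must first establish the module case and then bootstrap via the triangle $\Tha{s-1}{P} \to \Tsa{s}{P} \to \Shift[s]{\Co[s]{P}} \to$, using that $\Tha{s-1}{P}$ has finite projective dimension hence finite G-dimension; I would take the latter route since it reuses machinery already deployed in \lemref{Ext} and \thmref{ar}.
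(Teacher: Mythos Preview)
Your outline for the dualizing-complex case is in the right spirit but differs in execution. Rather than isolating a syzygy $\Co[s]{L}$ and verifying the three conditions for total reflexivity, the paper shows directly that homological boundedness of $\RHom[A]{M}{A}$ forces the biduality morphism $\delta_M^A \colon M \to \RHom[\Aop]{\RHom[A]{M}{A}}{A}$ to be an isomorphism in $\D[A]$, which by a separate lemma (\lemref{g}) is equivalent to finite G-dimension. The invertibility of $\delta_M^A$ is read off a single commutative diagram built from $A \qis \RHom[\Aop]{D}{D}$, Hom-evaluation, and adjointness; the one nontrivial ingredient is $\omega_{MAD}$ from \lemref{nonstandardTEV} with $T=A$ and $N=D$, whose three boundedness hypotheses are met because $\id[A]{D} < \infty$. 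Your syzygy route would ultimately need the same biduality for the module $\Co[s]{L}$, so descending to modules does not save work.

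Your commutative reduction, however, has a genuine gap. You propose to localize, complete, and invoke the dualizing-complex case over $\widehat{A_\p}$, but \ac\ is not known to ascend along $A_\p \to \widehat{A_\p}$: the paper remarks explicitly in \secref{examples} that \ac\ fails to ascend along flat maps in general, and the ascent-to-completion statement you would need is mentioned there only as work in progress and only under a Cohen--Macaulay hypothesis. The paper's argument avoids completion entirely. After reducing to $A$ local it applies \lemref{nonstandardTEV} with $T=A$ and $N = J = \Hom[A]{K}{E}$, where $K$ is the Koszul complex on a generating set for $\m$ and $E$ is the injective hull of the residue field. Since $J$ is a bounded complex of injectives, $\RHom[A]{M}{J}$ is automatically homologically bounded and the lemma applies; since $K$ has finite-length homology, Matlis biduality gives $K \qis \RHom[A]{J}{E}$, and a commutative diagram then shows that $\Ltp[A]{\delta_M^A}{K}$ is invertible, whence $\delta_M^A$ is invertible by \cite{HBFSIn03}. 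No dualizing complex for $A$ is used.
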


\begin{sprmk}
  Jorgensen and \c{S}ega \cite{DAJLMS06} construct a commutative local
  finite dimensional $k$-algebra $R$ and a finitely generated
  $R$-module $M$ with $\Ext{\ge 1}{M}{R}=0$ but infinite G-dimension.
  Note that in view of \thmref{g}, $R$ cannot satisfy \ac.  Further,
  it has length $8$ and $\m^3=0$, where $\m$ is its radical, and thus
  this example is minimal: Primarily with respect to the invariant
  $\min\setof[]{n}{\m^n=0}$---as every ring with radical square zero
  satisfies \ac\ by \prpcite[1.1]{DAJLMS04}. Secondarily with respect
  to length---as every commutative local artinian ring with radical
  cube zero and length at most $7$ satisfies \ac, also by
  \prpcite[1.1]{DAJLMS04}.
\end{sprmk}

\begin{pfofa}
  First assume that $A$ is commutative. It is sufficient to prove that
  homological boundedness of $\RHom[A]{M}{A}$ implies that the
  biduality morphism
  \begin{equation*}
    \dmapdef{\delta_M^A}{M}{\RHom[A]{\RHom[A]{M}{A}}{A}}
  \end{equation*}
  is an isomorphism in $\D[A]$; see \corcite[(2.3.8)]{lnm}. This can
  be verified locally, as $(\delta_M^A)_\p = \delta_{M_\p}^{A_\p}$ for
  all $\p$ in $\Spec{A}$, so we may assume that $A$ is local.
  
  Now, let $K$ be the Koszul complex on a set of generators for the
  maximal ideal $\m$, and let $E$ be the injective hull of $A/\m$. As
  the complex $\RHom[A]{\RHom[A]{M}{A}}{A}$ has degreewise finitely
  generated homology, it follows from \cite[1.3]{HBFSIn03} that
  $\delta_M^A$ is an isomorphism if $\Ltp[A]{\delta_M^A}{K}$ is one.
  Set $J=\Hom[A]{K}{E}$, and note that this is a bounded complex of
  injective modules and has homology modules of finite length.  By
  \lemref{nonstandardTEV} there is an isomorphism:
  \begin{equation*}
    \dqisdef{\omega_{MAJ}}{\Ltp[A]{\RHom[A]{M}{A}}{J}}{\RHom[A]{M}{J}}.
  \end{equation*}
  Furthermore, as $K$ has homology modules of finite length, the
  biduality morphism
  \begin{equation*}
    \mapdef{\delta_K^E}{K}{\Hom[A]{\Hom[A]{K}{E}}{E}}
  \end{equation*}
  is an isomorphism in $\D[A]$. The target complex is isomorphic to
  $\RHom[A]{J}{E}$, and there is a commutative diagram in
  $\D[A]$\pagebreak
  \begin{equation*}
    \xymatrix@C=6em{
      \Ltp[]{M}{K} 
      \ar[d]_-{\Ltp[]{M}{\delta_K^E}}^-{\qis} 
      \ar[r]^-{\Ltp[]{\delta_M^A}{K}}
      & \Ltp[]{\RHom[]{\RHom[]{M}{A}}{A}}{K}
      \ar[d]_-{\qis}^-{\omega_{\RHom[]{M}{A}AK}}
      \\ 
      \Ltp[]{M}{\RHom[]{J}{E}}
      \ar[dd]_-{\theta_{MJE}}^-{\qis} 
      & \RHom[A]{\RHom[A]{M}{A}}{K}
      \ar[d]^{\RHom[]{\RHom[]{M}{A}}{\delta_K^E}}_-{\qis}
      \\ 
      & \RHom[]{\RHom[]{M}{A}}{\RHom[]{J}{E}}
      \ar[d]_-{\qis}
      \\
      \RHom[]{\RHom[]{M}{J}}{E}
      \ar[r]^-{\qis}_-{\RHom[]{\omega_{MAJ}}{E}} 
      & \RHom[]{\Ltp[]{\RHom[]{M}{A}}{J}}{E}.
    }
  \end{equation*}
  The unlabeled isomorphism is adjointness. The morphism
  $\omega_{\RHom[]{M}{A}AK}$ is an isomorphism by
  \prpcite[2.1(v)]{LWCHHlb}, and the Hom-evaluation morphism
  $\theta_{MJE}$ is an isomorphism by \lemcite[4.4.(I)]{LLAHBF91}. It
  follows that $\Ltp[A]{\delta_M^A}{K}$ is an isomorphism.
  \hfill$\ulcorner\mspace{-5mu}\lrcorner$
\end{pfofa}

To prove the non-commutative part of \thmref{g} we need the following:

\begin{splem}
  \label{lem:g}
  Let $A$ be a two-sided noetherian ring with a dualizing complex; see
  {\rm \pgref{dc}}. An $A$-complex $M$ with bounded and degreewise
  finitely generated homology has finite G-dimension if and only if
  the complex $\RHom[A]{M}{A}$ is homologically bounded and the
  biduality morphism
  $\mapdef{\delta_M^A}{M}{\RHom[\Aop]{\RHom[A]{M}{A}}{A}}$ is an
  isomorphism in~$\D[A]$\footnote{By \pgref{dc} the dualizing complex
    $D$ has resolutions ${}_AP_A \qra {}_AD_A \qra {}_AI_A$ by
    $A$-bimodules, where each module in $P$ is projective over both
    $A$ and $\Aop$, and each module in $I$ is injective over both $A$
    and $\Aop$.  It follows that $A$ has a resolution $A \qra
    J=\Hom[A]{P}{I}$ by $A$-bimodules, where each module in $J$ is
    injective over both $A$ and $\Aop$. Consequently, $\delta_M^A$ is
    represented by $M \to \Hom[\Aop]{\Hom[A]{M}{J}}{J}$.}.
\end{splem}

\begin{proof}
  By \prpcite[3.8(b) and thm.~4.1]{CFH-06} the complex $M$ has finite
  G-dimension if and only if the complex $\Ltp[A]{D}{M}$ is
  homologically bounded and the natural morphism
  $\mapdef{\eta_M}{M}{\RHom[A]{D}{\Ltp[A]{D}{M}}}$ is an isomorphism
  in $\D[A]$. The next two isomorphisms are adjointness and Hom
  evaluation; see \lemcite[4.4.(I)]{LLAHBF91}.
  \begin{align}
    \RHom[A]{M}{A} &\qis \RHom[A]{\Ltp[A]{D}{M}}{D}\quad\text{and}\\
    \label{eq:i2}
    \Ltp[A]{D}{M} &\qis \RHom[\Aop]{\RHom[A]{M}{A}}{D}.
  \end{align}
  It follows that $\RHom[A]{M}{A}$ is homologically bounded if and
  only if $\Ltp{D}{M}$ is so. The diagram below shows that
  $\delta_M^A$ is an isomorphism if and only if $\eta_M$ is one.
  \begin{equation*}
    \xymatrix{
      \RHom[\Aop]{\RHom[A]{M}{A}}{A} 
      \ar[r]_-{\qis}
      & \RHom[\Aop]{\RHom[A]{M}{A}}{\RHom[A]{D}{D}} 
      \\
      M 
      \ar[u]^-{\delta_M^A} 
      \ar[d]_{\eta_M} 
      &
      \\
      \RHom[A]{D}{\Ltp[A]{D}{M}} 
      \ar[r]^-{\qis} 
      & \RHom[A]{D}{\RHom[\Aop]{\RHom[A]{M}{A}}{D}} 
      \ar[uu]^-{\qis}
    }
  \end{equation*}
  The upper horizontal isomorphism is by definition of a dualizing
  complex, and the lower one is induced by \eqref{i2}; the right
  vertical isomorphism is swap. \qed
\end{proof}

\begin{pfofac}
  Assume that $A$ has a dualizing complex $D$; see \pgref{dc} for the
  definition. By \lemref{g} it suffices, as in the commutative case,
  to show that homological boundedness of $\RHom[A]{M}{A}$ implies
  that $\mapdef{\delta_M^A}{\!\!M}{\RHom[\Aop\!]{\RHom[A]{M}{A}}{A}}$
  is an isomorphism in~$\D[A]$. This follows from the commutative
  diagram below.
  \begin{equation*}
    \xymatrix{
      M
      \ar[r]^-{\delta^A_M}
      \ar[d]^-{\qis}
      & \RHom[\Aop]{\RHom[A]{M}{A}}{A}
      \ar[d]_-{\qis}
      \\
      \Ltp[A]{\RHom[\Aop]{D}{D}}{M}
      \ar[d]^-{\qis}
      & \RHom[\Aop]{\RHom[A]{M}{A}}{\RHom[\Aop]{D}{D}}
      \\
      \RHom[\Aop]{\RHom[A]{M}{D}}{D}
      \ar[r]^-{\qis}
      & \RHom[\Aop]{\Ltp[A]{\RHom[A]{M}{A}}{D}}{D} 
      \ar[u]^-{\qis}
    }
  \end{equation*}
  The vertical isomorphisms on the left follow by definition of a
  dualizing complex \pgref{dc} and by \lemcite[4.4.(I)]{LLAHBF91}. The
  horizontal isomorphism is induced by $\omega_{MAD}$, see
  \lemref{nonstandardTEV}. The vertical isomorphisms on the right
  follow by Hom-tensor adjointness and the definition of a dualizing
  complex.
\end{pfofac}

\section{Examples}
\label{sec:examples}

We consider three elementary constructions that preserve the AC
property.

\begin{spprp}
  Let $A$ and $B$ be left-noetherian and Morita equivalent rings. If
  $A$ satisfies \ac/\uac, then $B$ satisfies \ac/\uac.
\end{spprp}

\begin{proof}
  There exist bimodules ${}_AP_B$ and ${}_BQ_A$, which are finitely
  generated, projective from both sides, and provide an equivalence
  \begin{equation*}
    \xymatrix@C=6em{
      \Cat{A}{mod}\, 
      \ar@<0.7ex>[r]^-{\tp[A]{Q}{-}} 
      & \,\Cat{B}{mod}.
      \ar@<0.7ex>[l]^-{\tp[B]{P}{-}}
    }
  \end{equation*}
  Moreover, for every $B$-module $N$ there is an isomorphism $N \is
  \Hom[A]{P}{\tp[B]{P}{N}}$; see \seccite[9.5]{wei}. For finitely
  generated $B$-modules $M$ and $N$ it follows that
  \begin{align*}
    \Ext[B]{i}{M}{N} 
    &\is \HH[-i]{\RHom[B]{M}{\Hom[A]{P}{\tp[B]{P}{N}}}} \\
    &\is \HH[-i]{\RHom[A]{\Ltp[B]{P}{M}}{\Ltp[B]{P}{N}}} \\
    &\is \Ext[A]{i}{\tp[B]{P}{M}}{\tp[B]{P}{N}}. \quad \qed
  \end{align*}
\end{proof}

\begin{spexa}
  \label{exa:morita}
  If $A$ satisfies \ac/\uac, then so does every matrix ring over $A$.
\end{spexa}

\begin{spprp}
  Let $A$ and $B$ be left-noetherian rings. The product ring
  \mbox{$A\!\times\!B$} satisfies \ac/\uac\ if and only if both $A$
  and $B$ satisfy \ac/\uac.
\end{spprp}

\begin{proof}
  Note that \mbox{$A\!\times\!B$} is left-noetherian. There are
  equivalences of categories
  \begin{align*}
    \xymatrix@C=10ex{\Mod{A} \times \Mod{B} \ar@<0.6ex>[r]^-{\times} &
      \ar@<0.6ex>[l]^-{\mathsf{s}} \Mod{A\!\times\!B}, }
  \end{align*}
  with the obvious definition of the functor $\times$. The functor
  $\mathsf{s}$ associates to an \mbox{$A\!\times\!B$}-module $M$ the
  pair $\langle \text{\small $(1,0)$}M,\text{\small $(0,1)$}M\rangle$,
  and to an \mbox{$A\!\times\!B$}-linear map \mbox{$\psi \colon M
    \longrightarrow N$} the pair of restrictions $\psi_{(1,0)} \colon
  \text{\small $(1,0)$}M \longrightarrow \text{\small $(1,0)$}N$ and
  $\psi_{(0,1)} \colon \text{\small $(0,1)$}M \longrightarrow
  \text{\small $(0,1)$}N$. Thus, for every pair $M$, $N$ of
  \mbox{$A\!\times\!B$}-modules, $\mathsf{s}$ induces an isomorphism
  \begin{align*}
    \Hom[A \times B]{M}{N} \cong \Hom[A]{\text{\small
        $(1,0)$}M}{\text{\small $(1,0)$}N} \oplus \Hom[B]{\text{\small
        $(0,1)$}M}{\text{\small $(0,1)$}N}.
  \end{align*}
  The functor $\mathsf{s}$ is exact and preserves projectivity,
  indeed, $\text{\small $(1,0)$}M \cong {}_AA_{A\times
    B}\otimes_{A\times B}M$ and similarly $\text{\small $(0,1)$}M
  \cong {}_BB_{A\times B}\otimes_{A\times B}M$. Thus there are
  isomorphisms
  \begin{align*}
    \Ext[A \times B]{i}{M}{N} \cong \Ext[A]{i}{\text{\small
        $(1,0)$}M}{\text{\small $(1,0)$}N} \oplus
    \Ext[B]{i}{\text{\small $(0,1)$}M}{\text{\small $(0,1)$}N},
  \end{align*}
  for all \mbox{$A\!\times\!B$}-modules $M$ and $N$, and all integers
  $i$.  Clearly, an \mbox{$A\!\times\!B$}-module $X$ is finitely
  generated over \mbox{$A\!\times\!B$} exactly when $\text{\small
    $(1,0)$}X$ and $\text{\small $(0,1)$}X$ are finitely generated
  over $A$ and $B$, respectively.  Straightforward arguments finish
  the proof. \qed
\end{proof}

The Chinese Remainder Theorem now yields:

\begin{spexa}
  If $\mathfrak{a}$ and $\mathfrak{b}$ are proper coprime ideals in a
  commutative noetherian ring $R$, then $R/\mathfrak{a}\mathfrak{b}$
  is AC if and only if both $R/\mathfrak{a}$ and $R/\mathfrak{b}$ are
  AC.
\end{spexa}

The results in \cite{DAJLMS04} show, in particular, that the AC
property does not ascend along flat ring homomorphisms. Descent,
however, is straightforward:

\begin{spprp}
  \label{prp:descent}
  Let $A$ be commutative, and let $B$ be a faithfully flat
  left-noetherian $A$-algebra. If $B$ satisfies \ac/\uac, then $A$
  satisfies \ac/\uac.
\end{spprp}

\begin{proof}
  Note that $B$ has a bimodule structure ${}_{A,B}B$.  Let $M$ and $N$
  be finitely generated $A$-modules. Because $B$ is $A$-flat, one has
  the following chain of isomorphisms, where the second is by
  \lemcite[4.4.(F)]{LLAHBF91} and the third is by adjointness.
  \begin{align*}
    \tp[A]{\Ext[A]{i}{M}{N}}{B} 
    &\is \HH[-i]{\Ltp[A]{\RHom[A]{M}{N}}{B}} \\
    &\is \HH[-i]{\RHom[A]{M}{\Ltp[A]{N}{B}}} \\
    &\is \HH[-i]{\RHom[B]{\Ltp[A]{M}{B}}{\Ltp[A]{N}{B}}} \\
    &\is \Ext[B]{i}{\tp[A]{M}{B}}{\tp[A]{N}{B}}
  \end{align*}
  The desired conclusion now follows by faithful flatness of $B$ over
  $A$. \qed
\end{proof}

\begin{spexa}
  A commutative noetherian ring $R$ is AC if either $R[X]$ or
  $\pows[R]{x}$ is so. Furthermore, if $(R,\m)$ is local and its
  $\m$-adic completion $\widehat{R}$ is AC, then so is $R$.
\end{spexa}

\begin{sprmk}
  For a commutative noetherian Cohen-Macaulay local ring $R$ one gets
  stronger results \cite{LWCHHld}. Indeed, let $\m$ be the maximal
  ideal of $R$, and let $x\in\m$ be an $R$-regular element. If one of
  the rings $R$, $\Rhat$, $R/(x)$, $\pows[R]{X}$, or
  $\poly[R]{X}_{(\m,X)}$ satisfies \ac/\uac, then they all do.
\end{sprmk}

\appendix

\section*{Appendix A Conjectures for rings and algebras}
\stepcounter{section}

The Auslander-Reiten and Tachikawa Conjectures originate in
representation theory of algebras, but they have recently received
considerable attention in commutative algebra; see
e.g.~\cite{ABS-05,CHnGJL04,HSV-04,LMS03}. This appendix provides a
quick guide to these and related conjectures, and it explains, in
greater detail, some of the points raised in the Introduction.

\begin{pac}
  According to \cite{DHp90} and \cite[intro.\ to ch.~V]{mas1},
  Auslander conjectured that every Artin algebra satisfies the
  condition \ac, defined in the Introduction. In \cite{DAJLMS04}
  Jorgensen and \c{S}ega showed that the conjecture fails, even for
  commutative local finite dimensional $k$-algebras: one
  counterexample $(R,\m)$ is Gorenstein with $\m^4=0$, another is not
  Gorenstein and has $\m^3=0$ and $\lgtR=8$. A subsequent short
  construction due to Smal{\o} \cite{SOS06} shows that $k\langle
  x,y\rangle/(x^2, y^2, xy+qyx)$, where $q^n\ne0,1$ for all $n$, does
  not satisfy \ac. Further counterexamples are constructed by Mori in
  \seccite[6]{IMr06}.
\end{pac}

\begin{spipg}
  \label{aclist}
  A commutative noetherian regular ring of infinite Krull dimension
  satisfies \ac\ but not \uac. We do not know of any Artin algebra or
  commutative noetherian local ring with that property. Rings known to
  satisfy \uac\ include:
  \begin{itemlist}
  \item Left-noetherian rings of finite global dimension.
  \item Artin algebras of finite representation type; see
    \seccite[2.3]{DHp90}.
  \item Group algebras of finite groups; this follows from
    \thmcite[2.4]{BCR-90}.\footnote{By the isomorphisms
      $\Ext[kG]{i}{M}{N} \is \Hom[k]{M}{\widehat{H}^i(G,N)}$ for
      $i>0$.}
  \item Rings of finite global repetition index. For example quotients
    $\mathcal{O}/\pi$, where $\mathcal{O}$ is a classical order over a
    discrete valuation ring, and $\pi$ is a uniformizing parameter;
    see \seccite[4]{KRGBHZ98}.
  \item Exterior algebras; see \corcite[2.4]{IMr07}.
  \item Commutative noetherian local rings that are Golod or complete
    intersection; see \prpcite[1.4]{DAJLMS04} and
    \thmcite[4.7]{LLAROB00}.
  \item Commutative noetherian Gorenstein local rings $R$ of
    multiplicity $\codim{R}+2$ or with $\codim{R} \le 4$; see
    \thmcite[3.5]{CHnDAJ03} and \thmcite[3.4]{LMS03}.
  \item The trivial extension of a commutative artinian local ring by
    its residue field; see \corcite[3.5]{SNsYYs}.
  \end{itemlist}
  \noindent Further examples of commutative noetherian local rings
  that satisfy \uac\ are given in
  \prpcite[1.1]{DAJLMS04}\footnote{Where part $(2)$ should read:
    $\operatorname{edim}R - \operatorname{depth}R \le 2$.} and in
  \thmcite[3.7]{CHnDAJ03}.
\end{spipg}

\begin{parc}
  The root of this is the Nakayama Conjecture posed in \cite{TNk58}.
  By work of M{\"u}ller \cite{BJM68}, it can be phrased as follows:
  \begin{condition}{0em}
  \item[] Every finite dimensional $k$-algebra $\Alg$ satisfies the
    following condition:

  \item[\nc] If each term in the minimal injective resolution of
    ${}_\Alg\Alg$ is projective, then $\Alg$ is quasi-Frobenius.
  \end{condition}
  In \cite{MAsIRt75} Auslander and Reiten propose the Generalized
  Nakayama Conjecture:
  \begin{condition}{1em}
  \item[] Every Artin algebra $\Alg$ satisfies the following
    condition:
  \item[\gnc] Every indecomposable injective $\Alg$-module occurs as a
    summand in one of the terms in the minimal injective resolution of
    ${}_\Alg\Alg$.
  \end{condition}
  A finite dimensional $k$-algebra is an Artin algebra, and an Artin
  algebra that satisfies \gnc\ also satisfies \nc,
  cf.~\prpcite[IV.3.1]{rta}. It is proved in \cite{MAsIRt75} that the
  Generalized Nakayama Conjecture is equivalent to:
  \begin{condition}{4em}
  \item[] Every Artin algebra $\Alg$ satisfies the following
    condition:
  \item[\gncm] Every finitely generated
    $\Alg$-generator\footnote{Defined as follows: for every finitely
      generated $\Alg$-module $T$ there is an epimorphism $M' \onto T$
      such that $M' \in \mathsf{add}(M)$.} %
    $M$ with $\Ext[\Alg]{\ge 1}{M}{M} = 0$ is projective.
  \end{condition}
  It is \textsl{not} known if a given finite dimensional $k$-algebra
  satisfies \gnc\ if and only if it satisfies \gncm. What \textsl{is}
  known is that \gnc\ holds for all $k$-algebras if and only if \gncm\
  does; see \cite[remark after thm.~3.4.3]{KYm96}.
  
  In \cite{ADS-93} the condition \gncm\ is considered for any
  noetherian ring, and it is noted that a ring $A$ satisfies \gncm\ if
  and only if it satisfies \arc; see the Introduction.  Indeed, an
  $A$-generator $M$ with $\Ext[A]{\ge 1}{M}{M} = 0$ also has
  $\Ext[A]{\ge 1}{M}{A} = 0$, and for every $A$-module $N$ the module
  $N \oplus A$ is an $A$-generator.
\end{parc}

\begin{spipg}
  \label{arclist}
  Rings know to satisfy \arc\ include:
  \begin{itemlist}
  \item Left-noetherian rings over which every finitely generated
    module has an ultimately closed projective
    resolution;\footnote{Defined as a degreewise finitely generated
      projective resolution for which there is a $d >0$ such that the
      $d$th syzygy has a decomposition whose factors are summands of
      earlier syzygies; see \seccite[3]{JPJ61}.} %
     see \prpcite[1.3]{MAsIRt75}.
  \item Rings $\Lambda/(\pmb{x})\Lambda$ where $\Lambda$ is a
    noetherian algebra of finite global dimension over a commutative
    noetherian complete local ring $(R,\m)$, and $\pmb{x} \in \m$ is a
    $\Lambda$-sequence; see \prpcite[1.9]{ADS-93}. In particular,
    commutative noetherian complete intersection local rings; see also
    \thmcite[4.2]{LLAROB00}.
  \item Group algebras $kG$, where $G$ is a finite group and $k$ is a
    field of characteristic $p>0$; see \cite[5.2.3]{ben2}.
  \item Commutative artinian local rings $(R,\m)$ with
    $2\lgt{(\SocR)}>\lgtR$ or with $\m^3=0$; see  \cite[4.3]{HLDOVl} and
    \thmcite[4.1]{HSV-04}.
  \item Commutative noetherian Golod local rings; see
    \prpcite[1.4]{DAJLMS04}.
  \item Rings $R/(\pmb{x})$ where $\pmb{x}$ is an $R$-sequence, and
    $R$ is commutative, noetherian, local, excellent, Cohen-Macaulay,
    normal, and either Gorenstein or a $\QQ$-algebra. This is a
    special case of \thmcite[0.1]{CHnGJL04}.
  \item Commutative noetherian Gorenstein local rings $R$ with
    $\codim{R} \le 4$; see \corcite[3.5]{LMS03}.
  \end{itemlist}
\end{spipg}

\begin{ptc}
  The conditions above relate to two conjectures of Tachikawa
  \cite[\S8]{HTc73}:

  \begin{condition}{4em}
  \item[] Every finite dimensional $k$-algebra $\Alg$ satisfies the
    following condition:
  \item[\tcis] If $\Ext[\Alg]{\ge 1}{\Hom[k]{\Alg_\Alg}{k}}{\Alg} =
    0$, then $\Alg$ is quasi-Frobenius.\footnote{The conjecture on
      p.~115 in \cite{HTc73} is equivalent to this one by the
      arguments on p.~114 ibid.}%

  \item[and \hfill]\vspace{\thmbotspace}

  \item[] Every quasi-Frobenius finite dimensional $k$-algebra $\Alg$
    satisfies:

  \item[\tciis] Every finitely generated $\Alg$-module $M$ with
    $\Ext[\Alg]{\ge 1}{M}{M} = 0$ is projective.
  \end{condition}
  It is proved in \cite{HTc73} and \cite{KYm96} that the Nakayama
  Conjecture holds if and only if both Tachikawa Conjectures hold. The
  diagram below depicts the known relations between conditions on
  finite dimensional $k$-algebras.
  \begin{equation*}
    \SelectTips{cm}{}
    \tag{A.3} \stepcounter{theorem}
    \begin{gathered}
      \xymatrix@C=3pc@R=2.5pc{ \gncm \ar@{<=>}[r]^-{(1)} \ar@3{<->}[d]
        \ar@{=>}[rrd] & \arc \ar@{=>}[r]^-{(3)} & \tcis \text{ and }
        \tciis \ar@3{<->}[d]
        \\
        \gnc \ar@{=>}[rr]^-{(2)} & & \nc }
    \end{gathered}\\[-.5ex]
  \end{equation*}
  \begin{center}
    \parbox{0.809\textwidth}{\footnotesize The notation
      \propp$\Rightarrow$\propq\ means that every algebra that
      satisfies \propp\ also satisfies \propq, while
      \propp$\Lleftarrow\mspace{-6mu}\Rrightarrow$\propq\ means that
      \textsl{all} algebras satisfy \propp\ if and only \textsl{all}
      algebras satisfy~\propq.}
  \end{center}
  \noindent The implications $(1)$ and $(2)$ were discussed above; the
  implication $(3)$ is clear; cf.\ the proof of \prpref{tachikawa}.
  The remaining implications are proved in
  \thmcite[3.4.3]{KYm96}.\footnote{The remark following
    \thmcite[3.4.3]{KYm96} indicates that \textsl{any given algebra}
    satisfies \nc\ if and only if it satisfies
    (\textsc{tc}{\scriptsize 1}) and (\textsc{tc}{\scriptsize
      2}). However, this strong statement is \textsl{not} known to be
    true, cf.\ thm.~3.4.2 ibid. We thank Professor Yamagata for
    clarifying this to us.}%
  
  In commutative algebra, Avramov, Buchweitz, and \c{S}ega
  \cite{ABS-05} make a conjecture related to the first of Tachikawa's
  conjectures mentioned above. Their conjecture is the following:
  \begin{condition}{4em}
  \item[] Every commutative noetherian Cohen-Macaulay local ring $R$
    satisfies:
  \item[\ltc] If $R$ has a dualizing module $D$ and $\Ext{\ge 1}{D}{R}
    = 0$, then $R$ is Gorenstein.
  \end{condition}
  It is clear that both conditions \gnc\ and \nc\ make sense for, and
  are satisfied by, every commutative noetherian local ring $R$.
  However, the conjecture of Avramov, Buchweitz, and \c{S}ega is still
  open, even in the case where $R$ is a finite dimensional
  $k$-algebra.  This emphasizes the point that the implication
  \mbox{\nc$\equiv\mspace{-4mu}\Rrightarrow$\tci} in (A.3) is not
  known to restrict to commutative local $k$-algebras. A list of rings
  that satisfy \ltc\ is provided in \cite[intro.\ and sec.~9]{ABS-05}.
\end{ptc}

\begin{spipg}
  \label{ourlist}
  We end this appendix by summarizing a couple of contributions of
  this paper.

  \thmref{ar} is new, even for finite dimensional $k$-algebras.  In
  particular, it adds exterior algebras and rings of finite global
  repetition index to the list of rings known to satisfy \arc.
      
  \prpref{tachikawa} shows that \ac\ implies a generalized version of
  \tcis\ for two-sided noetherian rings with a dualizing complex.
\end{spipg}


\section*{Appendix B AB rings}
\stepcounter{section}

Huneke and Jorgensen \cite{CHnDAJ03} introduce \emph{AB rings} as
commutative noetherian Gorenstein local rings that satisfy
\uac---equivalently \ac, cf.~\prpref{equivalentAB}.  Our
\lemref{nonstandardTEV} is inspired by ideas in \cite{CHnDAJ03}; in
particular by \prpcite[5.2 and 5.5]{CHnDAJ03}. In this appendix we
apply \lemref{nonstandardTEV} to reestablish two main results
\thmcite[4.1 and cor.~4.2]{CHnDAJ03} in the setting of complexes over
a commutative noetherian ring $R$ with $\id{R}$~finite.

In the following we use the term \emph{totally reflexive} for a module
that is either $0$ or of G-dimension $0$ in the sense of Auslander and
Bridger \cite{MAsMBr69}, cf.~the Introduction.

\begin{splem}
  \label{lem:bounded}
  Let $R$ be commutative and noetherian with $\id{R}$ finite, and let
  $M$ be an $R$-complex. If $M$ is isomorphic in $\D$ to a complex of
  totally reflexive $R$-modules, then the biduality morphism
  $\delta_M^R$ is invertible:
  \begin{equation*}
    M \qra \RHom{\RHom{M}{R}}{R}.
  \end{equation*}
  In particular, $M$ is homologically bounded if and only if\,
  $\RHom{M}{R}$ is so.
\end{splem}

\begin{proof}
  Let $G$ be a complex of totally reflexive $R$-modules such that
  there is an isomorphism $M \qis G$ in $\D$; further let $\alpha
  \colon R \qra I$ be a bounded injective resolution.  We start by
  proving that the complex $\Hom{G}{R}$ is isomorphic to $\RHom{M}{R}$
  in $\D$. We do so by arguing that $\Hom{G}{-}$ preserves the
  quasiisomorphism $\alpha$, that is, we show exactness of the complex
  \begin{displaymath}
    \Cone{\Hom{G}{\alpha}} \is \Hom{G}{\Cone{\alpha}}.
  \end{displaymath}
  Note that $\Cone{\alpha}$ is a bounded and exact complex of modules
  of finite injective dimension. Thus, for every $\d$ the complex
  $\Hom{G_\d}{\Cone{\alpha}}$ is exact by \corcite[(2.4.4)(a)]{lnm},
  and the claim follows by \lemcite[2.4]{CFH-06}. The complex
  $\Hom{G}{R}$ consists of totally reflexive $R$-modules, see
  \cite[obs.~(1.1.7)]{lnm}, so the argument above applies to show that
  $\Hom{\Hom{G}{R}}{R}$ is isomorphic to $\RHom{\RHom{M}{R}}{R}$ in
  $\D$.  Consequently, the morphism
  \begin{equation*}
    \dmapdef{\delta_M^R}{M}{\RHom{\RHom{M}{R}}{R}}
  \end{equation*}
  in $\D$ is represented by
  \begin{equation*}
    \dmapdef{\delta_G^R}{G}{\Hom{\Hom{G}{R}}{R}},
  \end{equation*}
  which is an isomorphism of $R$-complexes, as each module $G_\d$ is
  totally reflexive. \qed
\end{proof}

In the next two results, we use the notation $(-)^\star =
\RHom{-}{R}$.

\begin{theorem}
  \label{thm:CD}
  Let $R$ be commutative and noetherian with $\id{R}$ finite and
  assume that $R$ satisfies \ac. For $R$-complexes $M$ and $N$ with
  bounded and degreewise finitely generated homology the following
  conditions are equivalent:
  \begin{eqc}
  \item $\RHom{M}{N}$ is homologically bounded.
  \item $\RHom{N}{M}$ is homologically bounded.
  \item $\Ltp{M^\star}{N}$ is homologically bounded.
  \end{eqc}
\end{theorem}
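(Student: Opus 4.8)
The plan is to prove the cycle $(i)\Ra(iii)\Ra(ii)\Ra(i)$. Only the first implication genuinely uses \ac; the other two are formal consequences of biduality over a ring of finite self-injective dimension, together with the symmetry of the hypotheses in $M$ and $N$.

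Before the main argument I would record two facts, both resting on $\id{R}<\infty$. \textbf{(1)} Every $R$-complex $X$ with bounded and degreewise finitely generated homology is isomorphic in $\D$ to a bounded complex of totally reflexive modules, i.e.\ has finite G-dimension; hence by \lemref{bounded} the biduality morphism $\delta^R_X\colon X\to X^{\star\star}$ is invertible and $X$ is homologically bounded if and only if $X^{\star}$ is, and moreover $X^{\star}$ again has bounded and degreewise finitely generated homology. \textbf{(2)} Writing $M^{\star}\qis G$ with $G$ a bounded complex of totally reflexive modules and taking a degreewise finitely generated right-bounded free resolution $L\qra N$, the complex $\tp{G}{L}$ is a (possibly unbounded) complex of totally reflexive modules isomorphic in $\D$ to $\Ltp{M^{\star}}{N}$, since each $\tp{G_{i}}{L_{j}}$ is a finite direct sum of copies of $G_{i}$; so \lemref{bounded} applies to $\Ltp{M^{\star}}{N}$ as well. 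Combining Hom-tensor adjointness, swap, and biduality of $M$ then yields a natural isomorphism in $\D$
\begin{equation*}
  (\Ltp{M^{\star}}{N})^{\star} \qis \RHom{M^{\star}}{N^{\star}} \qis \RHom{N}{M^{\star\star}} \qis \RHom{N}{M}.
\end{equation*}

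For $(i)\Ra(iii)$ I would invoke \prpref{equivalentAB}, equivalently \lemref{nonstandardTEV} with $T=R$: since $\id{R}$ is finite and $R$ satisfies \ac, condition \eqclbl{iii} of \prpref{equivalentAB} holds, so homological boundedness of $\RHom{M}{N}$ forces the tensor evaluation morphism $\tev{MRN}\colon\Ltp{M^{\star}}{N}\lra\RHom{M}{N}$ to be an isomorphism in $\D$. The three boundedness hypotheses are satisfied: $\RHom{M}{R}=M^{\star}$ is homologically bounded because $\id{R}<\infty$, $\Ltp{R}{N}\qis N$ is bounded by the assumption on $N$, and $\RHom{M}{\Ltp{R}{N}}\qis\RHom{M}{N}$ is bounded by $(i)$. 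Hence $\Ltp{M^{\star}}{N}\qis\RHom{M}{N}$ is homologically bounded, which is $(iii)$.

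Now $(iii)\Ra(ii)$ is immediate: once $\Ltp{M^{\star}}{N}$ is homologically bounded it has bounded and degreewise finitely generated homology, so $(\Ltp{M^{\star}}{N})^{\star}$ is homologically bounded by fact (1)/(2), and the displayed isomorphism identifies it with $\RHom{N}{M}$. Finally $(ii)\Ra(i)$ follows by running $(i)\Ra(iii)\Ra(ii)$ with $M$ and $N$ interchanged, the standing hypotheses (commutativity and noetherianness of $R$, finiteness of $\id{R}$, condition \ac, and bounded degreewise finitely generated homology) being symmetric in the two complexes. I expect the point needing most care to be $(i)\Ra(iii)$: one must confirm all three boundedness hypotheses of \prpref{equivalentAB}/\lemref{nonstandardTEV}, the nontrivial one being homological boundedness of $\RHom{M}{R}$, which is exactly where $\id{R}<\infty$ is used; and it is worth emphasising that \ac\ is indispensable here, since the relevant tensor evaluation morphism can fail to be invertible without it.
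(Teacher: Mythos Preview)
Your argument is correct and follows essentially the same route as the paper: the cycle $(i)\Ra(iii)\Ra(ii)$ plus symmetry, with \prpref{equivalentAB} for the first step and \lemref{bounded} together with adjointness/biduality for the second. The only cosmetic difference is in $(iii)\Ra(ii)$: you represent $\Ltp{M^\star}{N}$ by $\tp{G}{L}$ with $G\qis M^\star$ bounded totally reflexive and apply \lemref{bounded} to it, whereas the paper represents $\RHom{N}{M}$ by $\Hom{L}{G}$ with $G\qis M$ bounded totally reflexive and applies \lemref{bounded} to that complex, using the Hom-evaluation isomorphism $\Ltp{N}{M^\star}\qis(\RHom{N}{M})^\star$ from \cite{HBF79} directly rather than your adjointness--swap--biduality chain; the two are equivalent.
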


\begin{proof}
  We prove the implications $(i)\Ra(iii)\Ra(ii)$, then $(ii) \Ra (i)$
  by symmetry.
  
  Homological boundedness of $\RHom{M}{N}$ yields by
  \prpref{equivalentAB} an isomorphism $\Ltp{M^\star}{N} \qis
  \RHom{M}{N}$ in $\D$. This shows the first implication.
  
  For the second implication, note that there are isomorphisms
  \begin{align*}
    \Ltp{M^\star}{N} \qis \Ltp{N}{\RHom{M}{R}} \qis
    \RHom{\RHom{N}{M}}{R},
  \end{align*}
  where the last one uses finiteness of $\id{R}$, see
  \cite[(1.4)]{HBF79}. Thus, the complex $\RHom{\RHom{N}{M}}{R}$ is
  homologically bounded, and \lemref{bounded} finishes the proof once
  we show that $\RHom{N}{M}$ is isomorphic in $\D$ to a complex of
  totally reflexive modules. To this end, let \mbox{$L \qra N$} be a
  degreewise finitely generated free resolution and choose a bounded
  complex $G$ of totally reflexive modules such that $G \qis M$; see
  \thmcite[(2.3.7)]{lnm} and \cite[(1.4)]{HBF79}.  The complex
  $\Hom{L}{G}$ is isomorphic to $\RHom{N}{M}$ in $\D$ and consists of
  totally reflexive modules. \qed
\end{proof}

\begin{spcor}
  Let $R$ be commutative and noetherian with $\id{R}$ finite and
  assume that $R$ satisfies \ac. For $R$-complexes $M$ and $N$ with
  bounded and degreewise finitely generated homology the following
  conditions are equivalent:
  \begin{eqc}
  \item $\Ltp{M}{N}$ is homologically bounded.
  \item $\RHom{M^\star}{N}$ is homologically bounded.
  \item $\RHom{N^\star}{M}$ is homologically bounded.
  \end{eqc}
\end{spcor}

\begin{proof}
  From the isomorphisms $N \qis N^{\star\star}$ and $M \qis
  M^{\star\star}$, see \thmcite[(2.3.14)]{lnm}, it follows that the
  complexes in \eqclbl{ii} and \eqclbl{iii} are isomorphic by swap. By
  \thmref{CD} condition \eqclbl{ii} holds if and only if the complex
  $\Ltp{M^{\star\star}}{N} \qis \Ltp{M}{N}$ is homologically
  bounded. \qed
\end{proof}

\begin{acknowledgements}
  We thank Petter Andreas Bergh and Jiaqun Wei for helpful comments on
  an earlier version of the paper.
\end{acknowledgements}

\bibliographystyle{spmpsci}      
  \newcommand{\arxiv}[2][AC]{\mbox{\href{http://arxiv.org/abs/#2}{\sf arXiv:#2
  [math.#1]}}}
  \newcommand{\oldarxiv}[2][AC]{\mbox{\href{http://arxiv.org/abs/math/#2}{\sf
  arXiv:math/#2
  [math.#1]}}}\providecommand{\MR}[1]{\mbox{\href{http://www.ams.org/mathscine%
t-getitem?mr=#1}{#1}}}
  \renewcommand{\MR}[1]{\mbox{\href{http://www.ams.org/mathscinet-getitem?mr=#%
1}{#1}}}

\end{document}